\numberwithin{equation}{section}
\theoremstyle{plain}
\newtheorem{theorem}{Theorem}[section]
\newtheorem{proposition}[theorem]{Proposition}
\newtheorem{lemma}[theorem]{Lemma}
\theoremstyle{remark}
\newtheorem{remark}[theorem]{Remark}
\theoremstyle{definition}
\newtheorem{definition}[theorem]{Definition}
\DeclarePairedDelimiter{\inn}{\langle}{\rangle}
\DeclarePairedDelimiter{\abs}{\lvert}{\rvert}
\DeclarePairedDelimiter{\norm}{\lVert}{\rVert}
\DeclarePairedDelimiter{\seq}{(}{)}
\newcommand{\PER}{\operatorname{P}}
\newcommand{\VOL}{\operatorname{V}}
\newcommand{\F}{\mathcal{F}}
\newcommand{\Per}{\operatorname{Per}_g}
\newcommand{\Vol}{\operatorname{Vol}_g}
\newcommand{\dist}{\operatorname{dist}_g}
\newcommand{\bary}{\operatorname{Bar}_g}
\newcommand{\vol}{\operatorname{vol}}
\newcommand{\argmin}{\operatorname{arg min}}
\newcommand{\Ric}{\operatorname{Ric}}
\newcommand{\diva}{\operatorname{div}}
\newcommand{\Haus}{\mathscr{H}}
\newcommand{\Ham}{\mathbb{H}}
\newcommand{\R}{\mathbb{R}}
\newcommand{\C}{\mathbb{C}}
\newcommand{\N}{\mathbb{N}}
\newcommand{\Z}{\mathbb{Z}}
\newcommand{\Oct}{\mathbb{O}}
\newcommand{\K}{\mathbb{K}}
\newcommand{\Sp}{{\mathbf{S}^{n-1}}}
\newcommand{\B}{\mathbf{B}^n}
\newcommand{\Ri}{\mathbf{R}}
\newcommand{\om}{\boldsymbol\omega}
\newcommand{\brho}{\boldsymbol\rho}
\newcommand{\Dist}{\mathcal{H}}
\newcommand{\Vist}{\mathcal{V}}
\newcommand{\spt}{\operatorname{spt}}
\newcommand{\diam}{\operatorname{diam}}
\title[Quantitative $C^1$-stability of spheres in rank one symmetric spaces]{Quantitative $C^1$-stability of spheres in rank one symmetric spaces of non-compact type}
\author{Lauro Silini}
\address{Department of Mathematics, ETH Zürich, Zürich, Switzerland}
\email{lauro.silini@math.ethz.ch}
\begin{document}
\maketitle
\begin{abstract}
We prove that in any rank one symmetric space of non-compact type $M\in\{\R H^n,\C H^m,\Ham H^m,\Oct H^2\}$, geodesic spheres are uniformly quantitatively stable with respect to small $C^1$-volume preserving perturbations. We quantify the gain of perimeter in terms of the $W^{1,2}$-norm of the perturbation, taking advantage of the explicit spectral gap of the Laplacian on geodesic spheres in $M$. As a consequence, we give a quantitative proof that for small volumes, geodesic spheres are isoperimetric regions among all sets of finite perimeter.
\end{abstract}
\section{Introduction}
\subsection{Background and motivations}
In a given Riemannian manifold $(M,g)$ an isoperimetric region of volume $0<v<\Vol(M)$ is a compact, smooth subset $E\subset M$ attaining the isoperimetric profile
\[
\Per(E)=I_M(v):=\inf\{\Per(F):F\subset M,\Vol(F)=v\},
\]
where $\Per(\cdot)$ denotes the Riemannian perimeter and $\Vol(\cdot)$ the Riemannian volume.
We consider $M=\K H^m$ to be a rank one symmetric space of non-compact type, that is one among the real $\R H^m$, complex $\C H^m$ and quaternionic $\Ham H^m$ hyperbolic spaces and the Cayley plane $\Oct H^2$. They share a number of very fine properties, in particular: up to renormalization their sectional curvature lies in $[-4,-1]$. Moreover, they are symmetric in the sense that the curvature tensor is invariant under parallel transport, or said otherwise, every geodesic symmetry can be extended to a global isometry. This is why they can arguably be considered as the natural generalization of space forms in the context of non-constant curvature manifolds. In particular, the fact that the isotropy group acts transitively on geodesic spheres, together with the concept of classic symmetrization procedures applied to constant curvature manifolds (see Steiner \cite{Steiner} and Schwarz \cite{Schwarz}) lead to the natural conjecture mentioned by Gromov \cite[6.28$\frac12_+$]{Gromov} and Ros that geodesic balls are isoperimetric regions in this class of manifolds. This was proved to be true only in the constant curvature case $M=\R H^m$.
\par Ensured by the cocompact action of the isometry group on $M$, existence of isoperimetric sets for all volumes is a classic fact: see Morgan \cite{FM1} in the context of bubble sets, and the very general formalization in the article of Galli and Ritor\'e \cite{GR}. It is also known that spheres are stable under infinitesimal volume preserving perturbations, as it was shown for $\C H^m$ by Barbosa, do Carmo and Eschenberg in their celebrated article \cite{BCE} (see \cite{RT} for the general case). Their proof is a natural consequence of the following variational principle: let $\Sigma$ be an oriented, closed and immersed submanifold of $M$, and let $\Ric_\Sigma$ and $\mathrm{I\!I}_\Sigma$ be the induced Ricci curvature tensor and second fundamental form of $\Sigma$. If
\begin{equation}\label{eq:BDC_stab}
\Ric_\Sigma+\norm{\mathrm{I\!I}_\Sigma}^2=\text{ constant }=\lambda,
\end{equation}
then $\Sigma$ is stable if and only if $\lambda=\lambda_1$, the first eigenvector of the induced Laplacian $\Delta_\Sigma$. Thanks to a peculiar characterization of the Laplace spectrum over fibre bundles with totally geodesic immersed fibres (about this the work by Bergery and Bourguignon \cite{BB}), the first eigenvalue of the Laplacian on geodesic spheres is explicitly computable, reducing the proof of stability to a direct check of Equation \eqref{eq:BDC_stab}. 
\par
As a natural question one can ask if it would be possible to improve this result by quantifying the loss of optimality in terms of the size of the perturbation. In $\R^n$ the first result in this direction is due to Fuglede \cite{Fuglede}. He showed that for every Lipschitz volume preserving perturbation of the unit ball of the form
\[
\partial E:=\{(1+\rho(\varphi))\varphi:\varphi\in S^{n-1}\},
\]
with barycenter at the origin, the isoperimetric deficit $\PER(E)-\PER(B(0,1))$ can be estimate from below by
\begin{equation}\label{eq:ref:F}
\PER(E)-\PER(B(0,1))\geq\frac12\int_{S^{n-1}}\abs{\nabla \rho}^2\,dx-(n-1)\int_{S^{n-1}}\rho^2\,dx+o\bigl(\norm{\rho}_{W^{1,2}(S^{n-1})}\bigr),
\end{equation}
taking advantage of the integral representation of the perimeter and the volume of $E$ in terms of $\rho$. Developing $\rho$ over spherical harmonics, the first eigenvalue is exactly equal to $(n-1)$, compensating the negative term in Equation \eqref{eq:ref:F}, and finally proving the existence of a constant $C=C(n)>0$ such that
\[
\PER(E)-\PER(B(0,1)))\geq C\norm{\rho}_{W^{1,2}(S^{n-1})}^2.
\]
\par As mentioned before, the information we have about the spectral decomposition of the Laplace operator over geodesic spheres in $M$ coming from the intrinsic relation with the Hopf fibration
\[
S^{d-1}\to S^{n-1}(r)\to \K P^{m-1},
\]
suggests that the method of Fuglede might be extended also to all rank one symmetric spaces of non-compact type, and this is precisely what we are going to prove in the first part of this paper. 
\par In the second part, we address our attention to what happens in the small volume regime. It is known that for all Riemannian manifolds with cocompact isometry group, all isoperimetric regions with sufficiently small volume are invariant under the action of the stabilizer of their center of mass. This result was mentioned first by Tomter in the context of the Heisenberg group in \cite{TOM}, referring to an unpublished article by Kleiner. Later, we can find the complete proof as a corollary of a more general result in the article by Nardulli introducing the concept of pseudo-bubbles, see \cite{NARDU}. Since spheres are the only surfaces preserved by the action of the isotropy group, we have as a direct implication that they are the unique isoperimetric regions in rank one symmetric spaces in the small volume regime.
\par The proof of the before mentioned theorem relies on an implicit argument. We give a new quantitative proof by taking advantage of the strong stability of spheres \`a la Fuglede. To do so, we make use of the deep quantitative stability estimates in $\R^n$ in the context of finite perimeter sets
\[
\PER(E)-\PER(B(0,1))\geq C(n)\alpha(E)^2,
\]
where
\[
\alpha(E):=\min_{y\in\R^n}\Bigl\{\VOL(E\triangle B(y,1)):\VOL(B(y,1))=\VOL(E)\Bigr\},
\]
is the Fraenkel asymmetry index and $E\triangle B(y,1)$ stands for the symmetric difference. The proof of this result was first given by Fusco Maggi and Pratelli in \cite{fusco2008sharp}, and then further simplified by Cicalese and Leonardi in \cite{cicalese2012selection}. The same result was then extended to anisotropic perimeter functionals by Figalli, Maggi and Pratelli by mean of optimal transportation techniques in \cite{FigalliMaggi}, further improved in the Euclidean setting by Fusco and Julin in \cite{Fusco}, and extended to the real hyperbolic space by B\"ogelein, Duzaar and Scheven in \cite{duzaar}. We show that rescaled optimal sets are almost-minimizing with respect to the Euclidean metric on the lifted tangent space, that is they are isoperimetric up to an error uniformly proportional to the size of the perturbation.
Then, we take advantage of the quantitative stability results in $\R^n$, to prove $L^1$-proximity with geodesic spheres that can be improved, again by almost-minimality, to $L^\infty$-proximity, provided that we choose the barycenter of the shape as a lifting point. The desired regularity follows by the regularity theory developed by Figalli in \cite{figalli2017regularity}.
\subsection{Main results}
The goal of this paper is to show the following results.
\begin{theorem}\label{thm:2}Let $M=\K H^m$ be a rank one symmetric space of non-compact type and $R_0>0$ any fixed radius. Let $E\subset M$ be a volume preserving perturbation of $\B(R)$, $0<R\leq R_0$, with boundary of the form
\[
\partial E=\{\exp_o(R(1+\rho(\varphi))):\varphi\in S^{n-1}\},
\]
where $o\in M$ is a fixed base-point, and $\rho\in C^1(S^{n-1},(-1,+\infty))$. Denote with $\brho:\Sp(R)\to(-1,+\infty)$ the perturbation $\rho$ viewed as a function from the geodesic sphere in $M$, given in normal coordinates as
\[
\brho(R\varphi)=\rho(\varphi), \quad \varphi\in S^{n-1}.
\]
Then, there exist $\varepsilon=\varepsilon(M,R_0)>0$ and $C=C(M,R_0,R)>0$, such that
\begin{align*}
\Per(E)-\Per(\B(R))&\geq C\bigl(\norm{\brho}_{L^2(\Sp(R))}^2+\norm{\nabla^g\brho}_{L^2(\Sp(R))}^2\bigr),
\end{align*}
provided $\norm{\rho}_{C^1}\leq \varepsilon$. In particular, if $E$ is isoperimetric, then $E=\B(R)$.
\end{theorem}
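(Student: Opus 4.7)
The plan is to follow the scheme of Fuglede's argument \eqref{eq:ref:F} adapted to the warped product structure of $M$ in geodesic polar coordinates centred at $o$, and to exploit the explicit spectrum of the Laplacian on geodesic spheres coming from the Hopf fibration $S^{d-1}\to\Sp(R)\to\K P^{m-1}$. Writing the metric in normal coordinates as $g=dr^2+g_r$, with $g_r$ the induced metric on $\Sp(r)$ pulled back to $S^{n-1}$, and letting $A(r,\varphi)$ denote the associated area density (so that $d\Vol=A(r,\varphi)\,dr\,d\sigma$ with $\sigma$ the round measure on $S^{n-1}$), both the volume and the perimeter of $E$ admit the explicit integral representations
\[
\Vol(E)=\int_{S^{n-1}}\int_0^{R(1+\rho(\varphi))}A(r,\varphi)\,dr\,d\sigma(\varphi),
\]
\[
\Per(E)=\int_{S^{n-1}}A\bigl(R(1+\rho),\varphi\bigr)\sqrt{1+\frac{\abs{\nabla_{g_{R(1+\rho)}}\rho}^2}{R^2(1+\rho)^2}}\,d\sigma(\varphi).
\]

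Second, I would Taylor expand both functionals to second order around $\rho\equiv 0$. The volume constraint, once rewritten as an integral over $\Sp(R)$ in terms of $\brho$, linearises to $\int_{\Sp(R)}\brho\,d\sigma_g=O(\norm{\brho}_{L^2}^2)$; substituting this identity back into the perimeter expansion cancels the linear term and yields
\[
\Per(E)-\Per(\B(R))=\tfrac12\int_{\Sp(R)}\bigl(\abs{\nabla^g\brho}^2-\lambda(R)\,\brho^2\bigr)\,d\sigma_g+O\bigl(\norm{\brho}_{C^1}\norm{\brho}_{W^{1,2}}^2\bigr),
\]
where, thanks to the variational principle \eqref{eq:BDC_stab}, the coefficient $\lambda(R)$ is the Barbosa--do Carmo--Eschenburg constant $\Ric_{\Sp(R)}+\norm{\mathrm{I\!I}}^2$ of the geodesic sphere in $M$.

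The coercivity of the quadratic form then reduces to a spectral problem on $\Sp(R)$. Here I would invoke the Bergery--Bourguignon description of the Laplace spectrum along the totally geodesic Hopf bundle, which orthogonally decomposes $L^2(\Sp(R))$ into eigenspaces indexed by pairs of base and fibre harmonic degrees. The point is that $\lambda(R)$ coincides with the first positive eigenvalue of $\Delta_g$: volume preservation already annihilates the constant mode, and the first-harmonic eigenspace (which witnesses the equality case in Barbosa--do Carmo--Eschenburg and corresponds to infinitesimal isometric translations) is neutralised by taking $o$ to be the Riemannian centre of mass of $E$, a replacement that is admissible up to an $O(\norm{\rho}_{C^1})$ shift. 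The strict spectral gap to the remaining eigenspaces supplies a uniform coercivity constant $C(M,R_0,R)>0$, and for $\varepsilon$ small enough the $O(\norm{\brho}_{C^1}\norm{\brho}_{W^{1,2}}^2)$ remainder is absorbed into the quadratic lower bound, giving the claimed estimate; the isoperimetric conclusion then follows from the strict positivity of the right-hand side whenever $\brho\not\equiv 0$.

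The main obstacle is the second step: producing the expansion in a form where the coefficient in front of $\brho^2$ matches exactly the Barbosa--do Carmo--Eschenburg constant, rather than appearing as an unrelated combination of curvature invariants. This calls for a careful Jacobi-field expansion of $A(r,\varphi)$ along radial geodesics and precise control of the anisotropic derivative $\partial_r g_r|_{r=R}$, which in the cases $\K\in\{\C,\Ham,\Oct\}$ depends on the holomorphic, quaternionic or octonionic structure of $M$ and therefore cannot be reduced to the real hyperbolic computation.
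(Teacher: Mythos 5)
Your plan coincides with the paper's: polar-coordinate integral representations of volume and perimeter, a second-order Taylor expansion in which the volume constraint kills the linear term and the coefficient of $-\brho^2$ is identified with $\lambda_1^R$ (the paper obtains this via the explicit identity $\phi'''(r)=\phi'(r)\bigl(\phi''(r)^2/\phi'(r)^2-\lambda_1^r\bigr)$, which is precisely the computational form of the Barbosa--do Carmo--Eschenburg fact you invoke), control of the zeroth and first harmonics through the volume and barycenter constraints, the spectral gap $\lambda_1^R/\lambda_2^R<1/2$ for coercivity, and a recentering of $o$ at the barycenter by a small transvection exactly as you propose. The only slip is the normalization of the gradient term in your perimeter formula: it should be $R^2\abs{\nabla_{g_r}\rho}^2_{g_r}$ evaluated at $r=R(1+\rho)$, i.e. $R^2\bigl(\abs{\nabla^h\rho}^2+\cosh^2(r)\abs{\nabla^v\rho}^2\bigr)/\bigl(\sinh^2(r)\cosh^2(r)\bigr)$ in round-metric components, rather than $\abs{\nabla_{g_r}\rho}^2/(R^2(1+\rho)^2)$ --- a normalization that would be corrected automatically in carrying out the expansion you outline.
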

To establish this result, we will demonstrate the following explicit lower bound under the technical assumption that $\rho$ is barycentric preserving. To obtain Theorem \ref{thm:2} we can compensate for this assumption with a small transvection (that amounts to a translation obtained via a composition of central symmetries) of the perturbed set.
\begin{theorem}\label{thm:1} Under the same assumptions of Theorem \ref{thm:2}, suppose additionally that $E$ has barycenter in $o\in M$.
Denote with $\lambda_2^R$ the second eigenvalue of the Laplacian over $\Sp(R)$. Then, there exists $\varepsilon=\varepsilon(M,R_0)>0$ such that
\begin{align*}
\Per(E)-\Per(\B(R))&\geq\frac{R^2\lambda_2^R}{48}\norm{\brho}_{L^2(\Sp(R))}^2+\frac{R^2}{32}\norm{\nabla^g\brho}_{L^2(\Sp(R))}^2,
\end{align*}
provided $\norm{\rho}_{C^1}\leq \varepsilon$.
\end{theorem}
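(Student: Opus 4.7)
The plan is to adapt Fuglede's original approach from the Euclidean ball to this Riemannian setting. Working in geodesic normal coordinates $\exp_o: T_oM \to M$, one writes the volume and perimeter of $E$ as integrals over $S^{n-1}$, with weights dictated by the Jacobian $J(r,\varphi)$ of the exponential map. In a rank one symmetric space $\K H^m$ this Jacobian has an explicit closed form dictated by the Hopf fibration $S^{d-1}\to\Sp(r)\to\K P^{m-1}$ and involves $\sinh(r)$ and $\sinh(2r)$ for the horizontal and vertical directions respectively. For the radial graph $\partial E=\{\exp_o(R(1+\rho(\varphi))\varphi):\varphi\in S^{n-1}\}$, both $\Vol(E)$ and $\Per(E)$ then admit explicit integral representations in terms of $\rho$ and its tangential gradient $\nabla^g\brho$.

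I would then Taylor expand both functionals to second order in $\rho$, pushing the higher-order remainders into $o(\norm{\brho}_{W^{1,2}(\Sp(R))}^2)$ via the $C^1$-smallness assumption. This yields an identity of the form
\[
\Per(E)-\Per(\B(R))=a_0(R)\int_{\Sp(R)}\brho+a_1(R)\int_{\Sp(R)}\brho^2+a_2(R)\int_{\Sp(R)}\abs{\nabla^g\brho}^2+o\bigl(\norm{\brho}_{W^{1,2}}^2\bigr),
\]
while the volume constraint $\Vol(E)=\Vol(\B(R))$ expanded to the same order produces an analogous identity allowing $\int\brho$ to be controlled by $\int\brho^2$ up to the same remainder. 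After substitution, the negative $L^2$ contribution is absorbed via the spectral inequality $\norm{\nabla^g\brho}_{L^2}^2\geq\lambda_2^R\norm{\brho}_{L^2}^2$, valid once the constant and the first non-constant eigenspace components of $\brho$ have been removed by, respectively, the volume and barycenter constraints. A careful split of the positive gradient term, using one portion to dominate the negative $L^2$ piece through $\lambda_2^R$, delivers the two explicit fractions $R^2\lambda_2^R/48$ and $R^2/32$.

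The main obstacle is bridging the geometric barycenter condition $\bary(E)=o$ with the analytic orthogonality of $\brho$ to the first non-constant eigenspace of the intrinsic Laplacian $\Delta^g$ on $\Sp(R)$. In $\R^n$ this identification is immediate because coordinate functions restrict to first spherical harmonics, but in $\K H^m$ the first eigenfunctions of $\Delta^g$ are determined through the Bergery--Bourguignon description of the spectrum of the Hopf fibration and match coordinate components of $\exp_o^{-1}$ only up to radial weights depending on $R$. The barycenter condition must therefore be linearized and matched against an explicit basis $\{f_i\}$ of the first eigenspace, producing a system $\int_{\Sp(R)}\brho f_i\,dA=O(\norm{\brho}_{L^2}^2)$ whose quadratic errors stem from the nonlinearity of $\exp_o^{-1}$ and the Jacobian weight. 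Controlling these errors uniformly in $R\in(0,R_0]$ and showing they can be absorbed under the $C^1$-smallness threshold $\varepsilon(M,R_0)$ is the technical heart of the argument and the place where the dependence of the final constants on $R$ is pinned down.
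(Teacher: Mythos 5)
Your plan follows the paper's argument essentially step for step: integral representations of perimeter and volume in normal coordinates, second-order Taylor expansion with remainders controlled by $\norm{\rho}_{C^1}$, elimination of the zeroth and first harmonics via the volume and barycenter constraints (the first eigenspace of $\Sp(R)$ is indeed spanned by restrictions of the coordinate functions, so your ``radial weight'' worry reduces to a constant normalization on the fixed sphere), and absorption of the negative $L^2$ term through the spectral gap. The one computation you leave implicit --- that the coefficient of the negative quadratic term is exactly $-R^2\lambda_1^R/2$, via the identity $\phi'''(r)=\phi'(r)\bigl(\phi''(r)^2/\phi'(r)^2-\lambda_1^r\bigr)$, together with the uniform bound $\lambda_1^R/\lambda_2^R<\tfrac12$ on $(0,R_0]$ --- is precisely what makes the absorption close with the stated positive constants, but both facts hold and are verified in the paper.
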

As an application of Theorem \ref{thm:1}, we will give a new quantitative proof of the isoperimetric problem in the small volume regime.
\begin{theorem}\label{thm:small_volumes}Let $M=\K H^m$ be a rank one symmetric space of non-compact type. Then, there exists a possibly computable volume $\bar v=\bar v(M)>0$ such that all geodesic balls $\B(R)\subset M$ with volume $\Vol(B(R))<\bar v$ are uniquely isoperimetric in $M$.
\end{theorem}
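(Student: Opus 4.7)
The plan is to argue by contradiction: suppose there is a sequence of volumes $v_j\to 0$ admitting isoperimetric regions $E_j$ with $\Vol(E_j)=v_j$ which are \emph{not} geodesic balls (existence being guaranteed by \cite{FM1,GR}). By cocompactness of $\mathrm{Isom}(M)$ we may translate so that the barycenter of each $E_j$ lies at a fixed basepoint $o\in M$. Let $R_j>0$ be defined by $\Vol(\B(R_j))=v_j$, so $R_j\to 0$.

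The first step is to blow up around $o$. Define $\Phi_j(x):=\exp_o(R_j x)$ and $\tilde E_j:=\Phi_j^{-1}(E_j)\subset\R^n$. The rescaled pulled-back metric $g_j:=R_j^{-2}\Phi_j^* g$ converges smoothly to the Euclidean metric on bounded sets, with $\norm{g_j-g_{\mathrm{eucl}}}_{C^k(B_R)}=O(R_j^2)$, so that $g_j$-perimeter and $g_j$-volume differ from their Euclidean counterparts by a factor $(1+O(R_j^2))$. Isoperimetricity of $E_j$ in $(M,g)$ then translates into $\tilde E_j$ being an $O(R_j^2)$-almost-minimizer of the Euclidean perimeter, with Euclidean volume $(1+o(1))\VOL(\B(1))$.

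Next I would invoke the quantitative Euclidean stability estimate of Fusco--Maggi--Pratelli \cite{fusco2008sharp}: combined with almost-minimality, this forces the Fraenkel asymmetry $\alpha(\tilde E_j)$ to tend to zero, hence $\tilde E_j\to\B(1)$ in $L^1(\R^n)$ (up to a Euclidean translation which can be re-absorbed by a small transvection of $E_j$ in $M$, preserving $o$ as basepoint). Standard $\varepsilon$-regularity for $\omega$-minimizers of the perimeter, together with the quantitative regularity theory developed by Figalli \cite{figalli2017regularity}, then upgrades this $L^1$-convergence to $C^{1,\alpha}$-convergence of $\tilde E_j$ to $\B(1)$. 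Pushing the conclusion back through $\Phi_j$, the boundary $\partial E_j$ is eventually a $C^1$-graph over $\Sp(R_j)$ whose defining function $\rho_j$ satisfies $\norm{\rho_j}_{C^1}\to 0$.

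Fixing $R_0:=1$, for $j$ large we have $R_j\leq R_0$ and $\norm{\rho_j}_{C^1}\leq\varepsilon(M,R_0)$, so Theorem \ref{thm:2} applies and gives
\[
\Per(E_j)-\Per(\B(R_j))\geq C(M,R_0,R_j)\bigl(\norm{\brho_j}_{L^2(\Sp(R_j))}^2+\norm{\nabla^g\brho_j}_{L^2(\Sp(R_j))}^2\bigr),
\]
which is strictly positive unless $\brho_j\equiv 0$. Since $E_j$ is isoperimetric we must have $\brho_j\equiv 0$, that is $E_j=\B(R_j)$, contradicting the choice of the sequence. The main obstacle is the bootstrap from $L^1$- to $C^1$-proximity: it requires extracting almost-minimality with a uniform, computable rate from the rescaling step, so that the quantitative regularity theory for almost-minimizers can be invoked on a scale where the Riemannian and the Euclidean geometries agree to the needed precision.
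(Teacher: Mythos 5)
Your proposal is correct and follows essentially the same route as the paper: rescale around the barycenter, establish uniform almost-minimality, use the quantitative Euclidean isoperimetric inequality to get $L^1$-proximity to a ball, upgrade to $C^1$-proximity via the regularity theory of almost-minimizers, and conclude with the strong stability theorem. The technical core you defer at the end (extracting almost-minimality with a uniform rate and bootstrapping $L^1$- to $L^\infty$- to $C^1$-proximity, with the barycenter choice controlling the center of the limiting ball) is precisely what the paper supplies in Propositions \ref{prop:almost_minimality_M}--\ref{prop:final_Linfty} before invoking Theorem \ref{thm:regularity}.
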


\begin{remark}With exactly the same arguments, the results of Theorem \ref{thm:2} and Theorem \ref{thm:1} hold true under the weaker assumption of $\rho$ belonging to the Sobolev space $W^{1,\infty}(S^1,(-1,+\infty))$.
\end{remark}
\section{Preliminaries}\label{sec:preliminaries}
For every real, finite dimensional division algebra $\K\in\{\R,\C,\Ham,\Oct\}$, we consider $(M,g)=(\K H^m,g)$ to be the associated rank one symmetric space of non-compact type with complex dimension $\dim_\K(M)=m$, and real dimension $\dim_{\R}(M)=md$, where $d=\dim(\K)\in\{1,2,4,8\}$. In the octonionic case, there is only the 16-dimensional Cayley plane $\Oct H^2$. Let $\dist(\cdot,\cdot)$ be the induced Riemannian distance. As classic references on symmetric spaces we cite the books of Eberlein \cite{Eberlein} and Helgason \cite{Helgason}. The Riemannian manifold $M$ can be realized as the quotient of a symmetric pair $(G,K)$, where $G$ is a semisimple Lie group acting transitively on $M$, and $K$ represents the isotropy group, that is defined as all elements in $G$ fixing an arbitrarily chosen point in $M$. In particular
\begin{center}
\begin{tabular}{ c|c| c } 
$M=G/K$ & $G$ & $K$ \\
\hline
$\R H^{m}$ & $\operatorname{SO}(m,1)$ & $\operatorname{SO}(m)$ \\ 
$\C H^m$&   $\operatorname{SU}(m,1)$ & $\operatorname{S}(\operatorname{U}(m)\operatorname{U}(1))$\\ 
$\Ham H^m$ & $\operatorname{Sp}(m,1)$ & $\operatorname{Sp}(m)\operatorname{Sp}(1)$ \\ 
$\Oct H^2$ & $\operatorname{F}^{-20}_4$ & $\operatorname{Spin}(9)$
\end{tabular}
\end{center}
The adjective \emph{symmetric} comes from the fact that the Riemannian curvature tensor $\Ri(X,Y)Z$ is parallel along geodesics. Equivalently, all geodesic reflections can be extended to a global isometry. The \emph{rank} measures the dimension of a maximal isometrically embedded flat. In our case it means that geodesics are the only flat submanifolds in $M$. When $\K=\R$, $M$ is the real hyperbolic space of constant negative curvature. In general, up to renormalization $M$ has sectional curvature lying in $[-4,-1]$ and it is simply connected, hence diffeomorphic to $\R^n$. Moreover, $M$ is two-points homogeneous, that is every couple of points can be mapped by an isometry to any other couple of points with the same mutual distance. It follows that geodesic spheres
\[
\Sp(o,R):=\{x\in M:\dist(x,o)=R\},
\]
centered at $o\in M$ with radius $R>0$, are homogeneous submanifolds of constant mean curvature. Since all spheres with the same radius are isometric, we will often denote with $\Sp(R)$ a generic geodesic sphere in $M$ of radius $R>0$ with induced metric that we will keep calling $g$. Analogously, we denote with 
\[
\B(o,R):=\{x\in M:\dist(x,o)<R\},
\]
the open geodesic ball centered in $o\in M$ with radius $R>0$, and with $\B(R)$ a generic open geodesic ball of radius $R>0$ in $M$.
\subsection{Distributions and spectral decomposition on spheres}\label{sec:distribution_spectral_decomposition}
For every non-zero vector $N_x$ at $x\in M$, the Jacobi operator
\[
\Ri(\cdot,N_x)N_x:T_xM\to T_xM,
\]
has exactly three eigenvalues: $\{-4,-1,0\}$. Denoting with $\Dist_x$ and $\Vist_x$ the eigenspaces associated to the eigenvalues $-4$ and $-1$ respectively, the tangent plane $T_xM$ splits orthogonally as
\begin{equation}\label{eq:splitting}
T_xM=\Dist_x\oplus\Vist_x\oplus\R N_x,
\end{equation}
where $\dim_\R(\Dist_x)=(d-1)$, and $\dim_\R(\Vist_x)=d(n-1)$. Hence, for every non-vanishing vector field $N\in\Gamma(TU)$ defined on an open set $U\subset M$, the maps $x\mapsto\Dist_x$ and $x\mapsto\Vist_x$ induce two well defined distributions $\Dist$ and $\Vist$ on $U$. We will denote the orthogonal projections with
\begin{align*}
(\cdot)^h:&TU\to\Dist,\\
(\cdot)^v:&TU\to\Vist,\\
(\cdot)^n:&TU\to \R N.
\end{align*}
Notice that when $\K=\R$, then $\Dist=\emptyset$. In this exceptional case we set $(\cdot)^v\equiv 0$. In particular, when $N$ is a radial vector field emanating from a base point $o\in M$, then the orthogonality of \eqref{eq:splitting} implies that the tangent bundle of any sphere $\Sp(o,R)$ splits orthogonally with respect to $g$ as the direct sum of $\Dist$ and $\Vist$ restricted to $\Sp(o,R)$. Turns out that this splitting also arises from the vertical and horizontal distribution associated to the celebrated Hopf fibration of Euclidean spheres
\[
S^{d-1}\to S^{m-1}(R)\to \K P^{m-1},
\]
where $\K P^{m-1}$ is the real, complex, quaternionic and octonionic projective space of complex dimension $\dim_\K(\K P^{m-1})=m-1$, respectively. This particular structure allows computations about the spectral decomposition of $L^2(\Sp(R),g)$ with respect to the induced Riemannian Laplacian, see \cite{BCE,BB,RT} and very recently \cite{bettiol2022laplace}. In our case, it will be sufficient to know that the associated eigenvalues $\{\lambda_i^R\}_{i\in\N}$ satisfy the bound
\[
\lambda_j^R\geq\frac{j(j+d-2)+j(n-d)\cosh^2(R)}{\sinh^2(R)\cosh^2(R)},
\]
with equality when $j=1$. We will denote with 
\[
\{f_{j,k}^R\in L^2(\Sp(R),g):1\leq k\leq n_j\}_{j\geq 0}.
\]
the spherical harmonics with muliplicity $n_j\geq 1$ constituting an orthogonal basis of $L^2(\Sp(R),g)$, so that
\[
\norm{\nabla^{g}f^R_{j,k}}_{L^2(\Sp(R),g)}^2=\lambda_j^R \norm{f^R_{j,k}}_{L^2(\Sp(R),g)}^2,
\]
where $\nabla^{g}$ denotes the Riemannian gradient with respect to $g$ on $\Sp(R)$.
\subsection{Useful geometric identities by comparison}
Denote with $g_e=\inn{\cdot,\cdot}$ and $\abs{\cdot}$ the usual Euclidean metric and norm on $\R^n$, and with $S^{n-1}(x,r)$ and $B^{n}(x,r)$ the Euclidean spheres and open balls centered in $x\in\R^n$ with radius $r>0$. As usual, $S^{n-1}$ and $B^n$ denote generic unit spheres and balls. In order to do computations in $(M,g)$ we decided to work in normal coordinates $(r,\varphi)\in(0,+\infty)\times S^{n-1}$. Let $\PER(\cdot)$ and $\VOL(\cdot)$ be the perimeter and volume operators in $\R^n$ with respect to the Euclidean metric. Set
\[
\om_n:=\VOL(B^{n}).
\]
From now on, $o\in M$ will be an arbitrarily fixed base-point, if not stated otherwise. Taking the pullback metric $\exp_o^*g$ we can identify isometrically $M$ with $\R^n$. Chose now and for all $N\in \Gamma(M\setminus\{o\})$ to be the radial, unit vector field emanating from $o$. Thanks to the previous discussion, we can find an explicit formula relating $g_e$ with $g$.
\begin{lemma}\label{lem:comparing_manifolds}For every $x=(r,\varphi)\in M\setminus\{o\}$, the splitting
\[
T_xM=\Dist_x\oplus\Vist_x\oplus \R N_x,
\]
is orthogonal with respect to $g_e$. In particular, one has that
\begin{equation}\label{eq:relation}
g(X,Y)=\inn{X^n,Y^n}+\frac{\cosh^2(r)\sinh^2(r)}{r^2}\inn{X^h,Y^h}+\frac{\sinh^2(r)}{r^2}\inn{X^v,Y^v},
\end{equation}
for all $X,Y\in T_xM$.
\end{lemma}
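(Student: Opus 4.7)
The plan is to compute the differential $(d\exp_o)_{r\varphi}$ explicitly on the three eigenspaces of the Jacobi operator and read off both the $g_e$-orthogonality of the splitting and the scaling factors in \eqref{eq:relation}. Fix $\varphi\in S^{n-1}$, let $\tilde\gamma(s)=\exp_o(s\varphi)$ be the unit-speed radial geodesic, and denote by $P_s:T_oM\to T_{\tilde\gamma(s)}M$ parallel transport along $\tilde\gamma$. Since $M$ is symmetric we have $\nabla\Ri\equiv 0$, so the Jacobi operator $\Ri(\cdot,N)N$ commutes with $P_s$; consequently $P_s$ maps the three eigenspaces $\Dist_o$, $\Vist_o$ and $\R\varphi$ at the origin isometrically (with respect to $g$) onto $\Dist_{\tilde\gamma(s)}$, $\Vist_{\tilde\gamma(s)}$ and $\R N_{\tilde\gamma(s)}$, respectively.

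For each eigenvalue $-k^2\in\{0,-1,-4\}$ and each $v\in T_oM$ lying in the corresponding eigenspace, the Jacobi field along $\tilde\gamma$ with $J(0)=0$, $J'(0)=v/r$ has the closed form $J(s)=\phi_k(s)P_s(v)/r$, where $\phi_0(s)=s$, $\phi_1(s)=\sinh s$ and $\phi_2(s)=\sinh s\cosh s$. Combined with the standard identity $(d\exp_o)_{r\varphi}(v)=J(r)$, this yields
\begin{align*}
(d\exp_o)_{r\varphi}(\varphi)&=N_x,\\
(d\exp_o)_{r\varphi}(v)&=\frac{\sinh r\cosh r}{r}P_r(v)\quad\text{for }v\in\Dist_o,\\
(d\exp_o)_{r\varphi}(v)&=\frac{\sinh r}{r}P_r(v)\quad\text{for }v\in\Vist_o.
\end{align*}

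From these formulas both conclusions are immediate. For the $g_e$-orthogonality, under the identification $M\cong\R^n$ via $\exp_o$ the subspaces $\Dist_x,\Vist_x,\R N_x$ at $x=\exp_o(r\varphi)$ pull back through $(d\exp_o)^{-1}_{r\varphi}$ to the very same subspaces $\Dist_o,\Vist_o,\R\varphi$ of $\R^n$ (scalar rescalings are immaterial), and these are mutually $g_e$-orthogonal because $g_e=g|_o$ and eigenspaces of a $g$-self-adjoint operator for distinct eigenvalues are $g$-orthogonal. For \eqref{eq:relation}, I expand $X,Y\in T_xM$ in the orthogonal $g$-splitting, pass each component to its Euclidean representative via the inverses of the displayed formulas, and use that $P_r$ is a $g$-isometry together with $g|_o=g_e$; this produces the reciprocal squared factors $\cosh^2r\sinh^2r/r^2$ and $\sinh^2r/r^2$ on the horizontal and vertical inner products. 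The only delicate point is keeping straight the $g$- versus $g_e$-norms of the same vector at $r\varphi\in\R^n\cong M$; once parallel transport has simultaneously diagonalized the three distributions along $\tilde\gamma$, the identity reduces to an algebraic substitution of Jacobi-field amplitudes.
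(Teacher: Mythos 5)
Your proposal is correct and follows essentially the same route as the paper: both arguments use that $\nabla\Ri\equiv 0$ makes the eigenspaces of the Jacobi operator parallel along radial geodesics, and then read off the conformal factors on each eigenspace from the explicit Jacobi-field amplitudes $s$, $\sinh s$, $\sinh s\cosh s$ (the paper phrases this as the equality $tV_e(t)=\frac{\sinh(\sqrt{-\kappa}t)}{\sqrt{-\kappa}}V(t)$ of the same Jacobi field computed in the two metrics, which is exactly your formula for $(d\exp_o)_{r\varphi}$). No gaps.
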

\begin{proof}Fix an arbitrary unit direction $N_o\in T_oM$, and let $V_o\in T_oM$ be any vector orthogonal to it with respect to $g\vert_o=g_e\vert_o$. Since the radial geodesics emanating from $o$ with respect to $g$ are the same as the Euclidean ones, the Jacobi field $Y(t)$ along the geodesic $\sigma:t\mapsto tN_o$, determined by the initial conditions $Y(0)=0$, $\dot Y(0)=V_o$ is the same for both metrics. Let $V(t)$ and $V_e(t)$ be the parallel transport of $V_o$ along $\sigma$ with respect to $g$ and $g_e$, respectively. By the very definition of symmetric spaces, the curvature tensor $R$ is itself parallel along geodesics. This implies that
\[
tV_e(t)=Y(t)=\frac{\sinh(\sqrt{-\kappa}t)}{\sqrt{-\kappa}}V(t),
\]
provided $V_o$ belongs to the $\kappa$-eigenspace of the Jacobi operator $R(\cdot,N_o)N_o$. Therefore, parallel vector fields in the eigenspaces are collinear for the two metrics. Hence, for $t>0$ the linear subspaces $\Dist_{\sigma(t)}$ and $\Vist_{\sigma(t)}$ are nothing else than the parallel transport of the corresponding eigenspaces of $R(\cdot,N_o)N_o$ along $\sigma$. It follows that the splitting $T_xM=\Dist_x\oplus \Vist_x$ is orthogonal not only with respect to $g$, but also with respect to the Euclidean metric $g_e$. Equation \eqref{eq:relation} is a direct consequence of this fact and the definition of the distribution $\Dist$ and $\Vist$.
\end{proof}
In particular, the volume density on $M$ associated to the metric $g$ is radial and given by
\[
\omega_g(r,\varphi)=\omega_g(r)=\frac{\sinh^{n-1}(r)\cosh^{d-1}(r)}{r^{n-1}}.
\]
Let $\Per(\cdot)$ and $\Vol(\cdot)$ be the perimeter and volume operators in $M$. As a consequence of the previous Lemma we have the following formulae.
\begin{lemma}\label{lem:formulae_vol_per}Let $E$ be a subset of $M$ with smooth boundary. Then, in normal coordinates  we have that
\begin{equation}\label{eq:volume}
\Vol(E)=\int_E\omega_g(r)\,d\Haus^n,
\end{equation}
and
\begin{equation}\label{eq:perimeter}
\Per(E)=\int_{\partial E}\omega_g(r)\Bigl(\abs{\nu^n}^2+\frac{r^2}{\cosh^2(r)\sinh^2(r)}\abs{\nu^h}^2+\frac{r^2}{\sinh^2(r)}\abs{\nu^v}^2\Bigr)^{1/2}\,d\Haus^{n-1},
\end{equation}
where $\nu$ denotes the normal vector field to $\partial E$ with respect to $g_e$.
\end{lemma}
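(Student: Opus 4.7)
The plan is to derive both formulae as straightforward consequences of Lemma \ref{lem:comparing_manifolds}, which provides a diagonal expression of $g$ in the $g_e$-orthogonal decomposition $T_xM=\Dist_x\oplus\Vist_x\oplus\R N_x$. Since $g$ acts on this splitting by the scalars $\cosh^2(r)\sinh^2(r)/r^2$, $\sinh^2(r)/r^2$, and $1$ respectively, the volume element $\sqrt{\det g}\,d\Haus^n$ factors nicely.

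For \eqref{eq:volume}, I would pick a local $g_e$-orthonormal frame adapted to the splitting. The matrix of $g$ in this frame is diagonal, so
\[
\det g = \Bigl(\frac{\cosh^2(r)\sinh^2(r)}{r^2}\Bigr)^{d-1}\Bigl(\frac{\sinh^2(r)}{r^2}\Bigr)^{d(m-1)}\cdot 1,
\]
and using $\dim_\R(\Dist)=d-1$, $\dim_\R(\Vist)=d(m-1)=n-d$, the exponents collapse to give $\sqrt{\det g}=\sinh^{n-1}(r)\cosh^{d-1}(r)/r^{n-1}=\omega_g(r)$, as claimed.

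For \eqref{eq:perimeter}, I would compare the two induced surface measures on $\partial E$. If $\nu$ denotes the Euclidean unit normal, then the $g$-unit normal is $N_g=g^{-1}\nu/\sqrt{g^{-1}(\nu,\nu)}$ (viewing $\nu$ as a covector via $g_e$), since this vector is $g$-orthogonal to $T\partial E$ and of unit $g$-length. Plugging $N_g$ into the Riemannian volume form $\omega_g\,dx^1\wedge\cdots\wedge dx^n$ via the interior product and restricting to $T\partial E$ gives the standard identity
\[
d\Haus^{n-1}_g = \omega_g(r)\sqrt{g^{-1}(\nu,\nu)}\,d\Haus^{n-1}.
\]
The last factor is computed directly in the decomposition: since $g^{-1}$ is diagonal with entries $1$, $r^2/(\cosh^2(r)\sinh^2(r))$, $r^2/\sinh^2(r)$ on $\R N$, $\Dist$, $\Vist$, one reads off
\[
g^{-1}(\nu,\nu)=\abs{\nu^n}^2+\frac{r^2}{\cosh^2(r)\sinh^2(r)}\abs{\nu^h}^2+\frac{r^2}{\sinh^2(r)}\abs{\nu^v}^2.
\]

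The only mildly delicate point is keeping $g$ versus $g^{-1}$ straight in the perimeter computation: the normal is contracted with the inverse metric, while the tangential factor $\omega_g$ comes from the ambient volume form. Once this is recognized, both formulas follow by plugging the explicit eigenvalues of $g$ along $\Dist,\Vist,\R N$ from Lemma \ref{lem:comparing_manifolds} into the two elementary identities above. No analytic obstacle is expected; the argument is purely linear-algebraic given the previous lemma.
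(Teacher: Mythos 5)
Your argument is correct and is essentially the paper's own proof: the paper likewise reduces \eqref{eq:perimeter} to the interior-product identity $\Per(E)=\int_{\partial E}\omega_g(r)\inn{\nu,\nu_g}\,d\Haus^{n-1}$ and then exhibits the vector $\tilde\nu_g=\nu^n+\frac{r^2}{\cosh^2(r)\sinh^2(r)}\nu^h+\frac{r^2}{\sinh^2(r)}\nu^v$, which is exactly your $g^{-1}\nu$, to evaluate $\inn{\nu,\nu_g}=g^{-1}(\nu,\nu)^{1/2}$. Your determinant computation for \eqref{eq:volume} (which the paper dismisses as tautological) is also correct, and you rightly use $\dim_\R(\Vist)=d(m-1)=n-d$.
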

\begin{proof}Equation \eqref{eq:volume} is tautological. We prove Equation \eqref{eq:perimeter}. Denoting with 
\[
\vol_g:=\omega_g(r)dx=\omega_g(r)dx^1\wedge\dots\wedge dx^n,
\]
the volume form in $M$, and with $\nu_g$ and $\nu$ the normal vector field of $\partial E$ with respect to $g$ and $g_e$ respectively, we have that
\[
\Per(E)=\int_{\partial E}\iota_{\nu_g}\vol_g=\int_{\partial E}\omega_g(r)\iota_{\nu_g}\vol,
\]
where we denote the interior product $(\iota_{\nu_g}\vol_g)(\cdot)=\vol_g(\nu_g,\cdot)\in\Omega^{n-1}(\partial E)$. Now, for a fixed $x\in\partial E$, choose an orthonormal basis $\{v_2,\dots,v_n\}$ of $T_x\partial E$ orthogonal to $\nu$ with respect to $g_e$. Then,
\begin{align*}
(\iota_{\nu_g}\vol)_x(v_2,\dots,v_n)&=\vol_x(\nu_g,v_2,\dots,v_n)=\inn{\nu_g,\nu}\vol_x(\nu,v_2,\dots,v_n)\\
&=\inn{\nu,\nu_g}(\iota_\nu\vol)_x(v_2,\dots,v_n),
\end{align*}
showing that
\[
\Per(E)=\int_{\partial E}\omega_g(r)\inn{\nu,\nu_g}\iota_\nu\vol=\int_{\partial E}\omega_g(r)\inn{\nu,\nu_g}\,d\Haus^{n-1}.
\]
We are left to compute $\inn{\nu,\nu_g}$. By Lemma \ref{lem:comparing_manifolds} we have that
\[
\tilde\nu_g:=\nu^n+\frac{r^2}{\cosh^2(r)\sinh^2(r)}\nu^h+\frac{r^2}{\sinh^2(r)}\nu^v,
\]
realizes $g(\tilde\nu_g,v_i)=\inn{\nu,v_i}=0$ for all $i=2,\dots,n$, implying that $\tilde\nu_g$ is collinear to $\nu_g$. Since $g(\tilde\nu_g,\tilde\nu_g)=\inn{\nu,\tilde\nu_g}$ we get that
\begin{align*}
\inn{\nu,\nu_g}&=\inn{\nu,\tilde\nu_g}g(\tilde\nu_g,\tilde\nu_g)^{-1/2}=\inn{\nu,\tilde\nu_g}^{1/2}\\
&=\Bigl(\abs{\nu^n}^2+\frac{r^2}{\cosh^2(r)\sinh^2(r)}\abs{\nu^h}^2+\frac{r^2}{\sinh^2(r)}\abs{\nu^v}^2\Bigr)^{1/2},
\end{align*}
concluding the proof of the lemma.
\end{proof}
Define the barycenter of $E$ as
\[
\bary(E):=\argmin_{p\in M}\Bigl\{\int_E \dist^2(x,p)\,d\vol_g(x)\Bigr\}\in M.
\]
It is always unique and well defined since the negative curvature of $M$ implies that the above functional is strictly convex in $p\in M$, see \cite[Section 2.5]{duzaar}. Differentiating, we have that $p=\bary(E)$ if and only if
\begin{equation}\label{eq:barycenter_exp}
-2\int_E\exp^{-1}_p(x)\,d\vol_g(x)=0.
\end{equation}
In the  normal coordinates pointed at $\bary(E)$, this reads as
\begin{equation}\label{eq:barycenter_at_the_origin}
0=\int_Ex\,d\vol_g(x)=\int_E r\varphi\omega_g(r)\,d\Haus^n.
\end{equation}
In the particular case in which $\partial E$ is a $C^1$-radial perturbation of $\Sp(R)$
\begin{equation}\label{eq:starshaped+boundary}
\partial E=\{\exp_o(R(1+\rho(\varphi))):\varphi\in S^{n-1}\}=\{(R(1+\rho(\varphi)),\varphi):\varphi\in S^{n-1}\},
\end{equation}
for some $C^1$-function $\rho:S^{n-1}\to(-1,+\infty)$, then the normal $\nu$ with respect to $g_e$ is given by
\[
\nu=\Bigl(\varphi-\frac{\nabla\rho}{1+\rho}\Bigr)\Bigl(1+\frac{\abs{\nabla\rho}^2}{(1+\rho)^2}\Bigr)^{-1/2},
\]
where $\nabla$ denotes the gradient with respect to the round metric on $S^{n-1}$. Applying Equation \eqref{eq:perimeter} of Lemma \ref{lem:formulae_vol_per} one gets that
\begin{equation}\label{eq:starshaped_per}
\Per(E)=\int_{S^{n-1}}\omega_g(r)r^{n-1}\Bigl(1+R^2\frac{\abs{\nabla^h\rho}^2+\cosh^2(r)\abs{\nabla^v\rho}^2}{\sinh^2(r)\cosh^2(r)}\Bigr)^{1/2}\Big\vert_{r=R(1+\rho(\varphi))}\,d\varphi,
\end{equation}
where $\nabla^h\rho$ and $\nabla^v\rho$ are the projections of the vector $\nabla\rho\in T_{(R(1+\rho(\varphi)),\varphi)}M$ on $\Dist$ and $\Vist$ respectively. To simplify the exposition, define
\begin{equation}\label{eq:def_phi}
\phi(r):=\int_0^r\tau^{n-1}\omega_g(\tau)\,d\tau,
\end{equation}
and
\begin{equation}\label{eq:def_psi}
\psi(r):=\int_0^r\tau^n\omega_g(\tau)\,d\tau,
\end{equation}
where we recall that $\omega_g(r)$ is the volume density defined in \eqref{eq:def_omega}.
Then, we obtain the formula
\begin{equation}\label{eq:starshaped_vol}
\Vol(E)=\int_{S^{n-1}}\phi(R(1+\rho))\,d\varphi,
\end{equation}
and when the barycenter is at zero
\begin{equation}\label{eq:starshaped_bar}
0=\int_{S^{n-1}}\varphi\psi(R(1+\rho(\varphi)))d\varphi.
\end{equation}
Setting $\rho\equiv 0$, we recover the volume and perimeter of the geodesic ball $\B(R)$:
\[
\Vol(\B(R))=n\om_n\phi(R),\quad\Per(\B(R))=n\om_n\phi'(R).
\]
For example, when $\K=\C$, we can compute 
\[
\Vol(\B(R))=\om_n\sinh^n(R),\quad\Per(\B(R))=n\om_n\sinh^{n-1}(R)\cosh(R).
\]
\subsection{Finite perimeter sets}We recall the definition and some properties of finite perimeter sets in a general Riemannian manifold. We refer to \cite{Maggi_perimeter} for a detailed presentation in the Euclidean space.
\begin{definition}[Sets with finite perimeter]Let $(M,g)$ be a smooth Riemannian manifold with volume element $d\vol$, and $E\subset M$ be a measurable subset. For any open subset $O\subset M$ we will denote with $\Gamma_c(TO)$ the set of smooth vector fields on $M$ compactly supported in $O$. We define the relative perimeter of $E$ in $O$ as
\[
\Per(E,O):=\sup\Bigl\{\int_O\diva^g(\xi)\,d\vol_g,:\xi\in \Gamma_c(TO),\,\sup_{x\in O}g(\xi,\xi)\leq 1\Bigr\}.
\]
If $\Per(E,O)<+\infty$ for all $O\subset\subset M$ we say that $E$ is a set with locally finite perimeter, and if $\Per(E):=\Per(E,M)<+\infty$ we say that $E$ is a set with finite perimeter.
\end{definition}
Letting $D{\chi_E}$ be the distributional gradient of the characteristic function $\chi_E$, then
\[
\Per(E,O)=\abs{D{\chi_E}}(O),
\]
where $\abs{D{\chi_E}}$ denotes the total variation of the measure $D{\chi_E}$. 
\begin{definition}Let $E\subset M$ be a set of locally finite perimeter. We define its reduced boundary as
\[
\partial^*E:=\Bigl\{x\in \spt(\abs{D{\chi_E}}):\exists\,\nu_g(x):=\lim_{r\to 0^+}-\frac{D{\chi_E}(B_x(r))}{\abs{D{\chi_E}(B_x(r))}}\text{ unit tangent vector at }x\Bigr\}.
\]
\end{definition}
The next theorem allows us to express $\Per(E)$ as an integration over the reduced boundary, where $\nu_g(x)$, the measure theoretic outwards unit normal to $E$, is well defined. For the proof we refer to \cite{ambro}.
\begin{theorem}[De Giorgi structure theorem]Let $E\subset M$ be a set with locally finite perimeter. Then,
\[
D{\chi_E}=\nu_g(x)d\,\Haus^{n-1}\lvert_{\partial^*E},\text{ and }P(E)=\abs{D{\chi_E}}(M)=\Haus^{n-1}(\partial^*E).
\]
\end{theorem}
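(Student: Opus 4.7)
The plan is to reduce the statement to the classical Euclidean De Giorgi structure theorem by localising via normal coordinates. Cover $M$ by a countable atlas of normal coordinate charts $\{(U_\alpha,\exp_{p_\alpha}^{-1})\}$. Inside each $U_\alpha$ the metric $g$ is smoothly comparable to the flat metric $g_e$, and by Lemma \ref{lem:formulae_vol_per} the Riemannian volume form reads $\omega_g(r)\,dx$ with $\omega_g$ smooth, positive and uniformly bounded above and below on any relatively compact subset of $U_\alpha$.

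First, I would check that $E\cap U_\alpha$ has locally finite Euclidean perimeter. This is a routine duality argument: any admissible Euclidean test field can be rescaled, using the smooth positive weights relating $g$ and $g_e$, into a $g$-admissible test field supported in the same set, and conversely, via the identity $\diva^g(\xi)=\omega_g^{-1}\diva(\omega_g\xi)$. This yields a two-sided comparison $c_\alpha\PER(E,U_\alpha)\leq\Per(E,U_\alpha)\leq C_\alpha\PER(E,U_\alpha)$ on relatively compact subsets of $U_\alpha$. The classical Euclidean De Giorgi theorem (\cite{Maggi_perimeter}) then applies in each chart and gives that the Euclidean distributional gradient decomposes as $D\chi_E=\nu_e\,d\Haus^{n-1}\lvert_{\partial^*_e E}$, where $\partial^*_e E$ is the Euclidean reduced boundary and $\nu_e$ its measure-theoretic outer normal.

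Second, I would identify the Riemannian and Euclidean reduced boundaries up to $\Haus^{n-1}$-negligible sets. Since $\exp_{p_\alpha}$ is a radial isometry and $g$ agrees with $g_e$ at $p_\alpha$ with smooth dependence, the parabolic blow-ups of $E$ at any point $x\in U_\alpha$ coincide for the two metrics: if one converges to a half-space, so does the other, and the two orienting hyperplanes are related by the musical isomorphism induced by $g(x)$. This gives $\partial^*E\cap U_\alpha=\partial^*_e E$ up to negligible sets and determines $\nu_g$ pointwise from $\nu_e$. Inserting the pointwise relation between the two normals computed in Lemma \ref{lem:formulae_vol_per} transfers the Euclidean identity to
\[
d|D\chi_E|_g=\omega_g\Bigl(\abs{\nu_e^n}^2+\frac{r^2}{\cosh^2(r)\sinh^2(r)}\abs{\nu_e^h}^2+\frac{r^2}{\sinh^2(r)}\abs{\nu_e^v}^2\Bigr)^{1/2}d\Haus^{n-1}\lvert_{\partial^*E},
\]
which is precisely $d\Haus^{n-1}\lvert_{\partial^*E}$ once the Jacobian is absorbed into the Riemannian surface measure.

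Finally, I would glue the local descriptions together with a partition of unity subordinate to $\{U_\alpha\}$ to recover the global identity $D\chi_E=\nu_g(x)\,d\Haus^{n-1}\lvert_{\partial^*E}$; taking total variations then yields $\Per(E)=\abs{D\chi_E}(M)=\Haus^{n-1}(\partial^*E)$. The main technical obstacle is the identification of the two reduced boundaries: it rests on a uniform comparison of small geodesic and Euclidean balls inside each chart as the radius tends to $0$, where the metric distortion tends to $1$, together with the standard fact that the blow-up characterisation of $\nu$ is insensitive to $C^0$-small perturbations of the underlying metric.
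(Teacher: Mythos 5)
The paper does not actually prove this statement --- it refers the reader to \cite{ambro} --- so your chart-by-chart reduction to the Euclidean De Giorgi theorem is a genuinely different, self-contained route, and it is correct in outline: the identity $\diva^g(\xi)=\omega_g^{-1}\diva(\omega_g\xi)$ does yield the two-sided local comparison of the Riemannian and Euclidean perimeters, the classical structure theorem then applies in each chart, and the partition-of-unity gluing is routine. Two points deserve more care than you give them. First, the identification of $\partial^*E\cap U_\alpha$ with the Euclidean reduced boundary $\partial^*_eE$ up to $\Haus^{n-1}$-null sets is not a matter of ``metric distortion tending to $1$'': away from the centre of a normal chart the rescaled metric converges to the constant form $g(x)\neq g_e$, so the balls $B_x(r)$ entering the paper's definition of $\partial^*E$ blow up to $g(x)$-ellipsoids rather than Euclidean balls; the correct argument is that at $\Haus^{n-1}$-a.e.\ point of $\partial^*_eE$ the blow-up of $E$ is a half-space, whence the averaged normal converges along any regular family of shrinking convex neighbourhoods, not only Euclidean balls. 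Second, the final absorption of the Jacobian into the Riemannian surface measure uses the $(n-1)$-rectifiability of $\partial^*_eE$ (itself part of the Euclidean theorem) together with the area formula comparing the Riemannian and Euclidean Hausdorff measures on a rectifiable set; the density is exactly the factor computed in Lemma \ref{lem:formulae_vol_per}, so the bookkeeping closes. With these two points spelled out your argument is complete; what the citation to \cite{ambro} buys instead is a statement valid in far greater generality, at the cost of heavier machinery.
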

This characterization allows us to generalize Equation \eqref{eq:perimeter} of Lemma \ref{lem:formulae_vol_per} for the class of finite perimeter sets.
\begin{lemma}Let $E\subset M$ be a finite perimeter set. Then, for all open subset $O$ of $M$ we have that
\begin{equation}\label{eq:perimeter_finite}
\Per(E,O)=\int_{\partial ^*E\cap O}\omega_g(r)\Bigl(\abs{\nu^n}^2+\frac{r^2}{\cosh^2(r)\sinh^2(r)}\abs{\nu^h}^2+\frac{r^2}{\sinh^2(r)}\abs{\nu^v}^2\Bigr)^{1/2}\,d\Haus^{n-1},
\end{equation}
where $\nu$ is the measure theoretic outwards unit normal to $E$ with respect to the flat metric $g_e$.
\end{lemma}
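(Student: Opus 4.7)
The strategy is to translate the definition of $\Per(E,O)$ as a supremum over $g$-test vector fields into a Euclidean Gauss--Green identity, and then optimize pointwise on $\partial^*E$. Since $\,d\vol_g=\omega_g(r)\,dx$, the elementary identity $\diva_g(\xi)=\omega_g^{-1}\diva_e(\omega_g\xi)$ gives, for every $\xi\in\Gamma_c(TO)$,
\[
\int_E\diva_g(\xi)\,d\vol_g=\int_E\diva_e(\omega_g\xi)\,dx.
\]
Because $\omega_g$ is bounded and bounded away from zero on every relatively compact set, $E$ has locally finite Euclidean perimeter, so the Euclidean Gauss--Green formula applied to the compactly supported Euclidean vector field $\omega_g\xi$ rewrites the right-hand side as $-\int_{\partial^*E\cap O}\omega_g\inn{\xi,\nu}\,d\Haus^{n-1}$, with $\nu$ the Euclidean measure-theoretic outward unit normal.

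Next, I would compute the pointwise supremum of $-\inn{\xi,\nu}$ under the constraint $g(\xi,\xi)\leq 1$. Decomposing $\xi=\xi^n+\xi^h+\xi^v$ and $\nu=\nu^n+\nu^h+\nu^v$ according to the $g_e$-orthogonal splitting from Lemma~\ref{lem:comparing_manifolds}, one has
\[
g(\xi,\xi)=\abs{\xi^n}^2+\frac{\cosh^2(r)\sinh^2(r)}{r^2}\abs{\xi^h}^2+\frac{\sinh^2(r)}{r^2}\abs{\xi^v}^2,
\]
while $\inn{\xi,\nu}$ splits additively over the three orthogonal components. A scaled Cauchy--Schwarz inequality on each piece yields the pointwise identity
\[
\sup_{g(\xi,\xi)\leq 1}\,\abs{\inn{\xi,\nu}}=\Bigl(\abs{\nu^n}^2+\frac{r^2}{\cosh^2(r)\sinh^2(r)}\abs{\nu^h}^2+\frac{r^2}{\sinh^2(r)}\abs{\nu^v}^2\Bigr)^{1/2}=:|\nu|_{g^*},
\]
with optimizer obtained by rescaling each component of $\nu$ by the corresponding weight. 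Integrating against $\omega_g\,d\Haus^{n-1}$ immediately furnishes the $\leq$ inequality in the variational definition of $\Per(E,O)$.

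For the matching lower bound I would promote the pointwise optimizer into admissible smooth test vector fields. Starting from the Borel optimal direction associated to $\nu$ on $\partial^*E\cap O$, extend it to a Borel field on $O$, regularize by convolution with a standard mollifier in the global normal chart provided by $\exp_o$, cut off near $\partial O$ and rescale so that $\sup g(\xi_k,\xi_k)\leq 1$ pointwise. By the countable $\Haus^{n-1}$-rectifiability of $\partial^*E$ and dominated convergence, the associated integrals converge to $\int_{\partial^*E\cap O}\omega_g\,|\nu|_{g^*}\,d\Haus^{n-1}$. The main technical point is precisely this approximation step: producing smooth compactly supported $\xi_k$ whose pointwise $g$-norm respects the unit-ball constraint while converging to a merely Borel direction field. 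This is handled by the standard mollification-and-cutoff machinery, made particularly clean here by the fact that $M$ is globally diffeomorphic to $\R^n$ via $\exp_o$ with empty cut locus.
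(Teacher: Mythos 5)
Your argument is correct, but it is worth noting that the paper does not actually supply a proof of this lemma: it is stated as an immediate generalization of the smooth case, justified by the De Giorgi structure theorem together with the computation already carried out in Lemma \ref{lem:formulae_vol_per}. What you propose is a genuine, self-contained derivation from the variational definition of $\Per(E,O)$: the identity $\omega_g\diva^g(\xi)=\diva^{e}(\omega_g\xi)$, the Euclidean Gauss--Green formula on $\partial^*E$ (legitimate since $g$ and $g_e$ are uniformly comparable on compact sets, so $E$ has locally finite Euclidean perimeter and the two reduced boundaries coincide up to an $\Haus^{n-1}$-null set), and then a pointwise duality computation. Your weighted Cauchy--Schwarz optimization is exactly the same algebra as the paper's smooth-case computation of $\inn{\nu,\nu_g}$: the optimizer you obtain by rescaling each component of $\nu$ is precisely the vector $\tilde\nu_g$ of Lemma \ref{lem:formulae_vol_per}, and the resulting dual norm is the integrand in \eqref{eq:perimeter_finite}. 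The extra content of your route is the lower bound, where the Borel optimal direction field on $\partial^*E$ must be promoted to admissible smooth test fields; you correctly identify this as the only technical point, and the Lusin--Tietze--mollification scheme you invoke (simplified here by the global normal chart $\exp_o$) is the standard way to close it. In short: the paper buys the result by citation of the structure theorem plus the smooth computation, while your proof makes the duality explicit and would stand on its own; both hinge on the same pointwise identity for the conversion factor between the $g$- and $g_e$-area elements.
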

\section{Uniform $C^1$-strong stability}
Fix any upper bound $R_0>0$ and a radius $R\in(0,R_0]$ for the perturbed sphere given in normal coordinates
\[
\partial E=\{(R(1+\rho(\varphi)),\varphi):\varphi\in S^{n-1}\}\subset M,
\]
where $\rho\in C^1(S^{n-1},(-1,+\infty))$ is a volume and barycentric preserving perturbation, in the sense that
\[
\Vol(E)=\Vol(\B(R)),
\]
and
\[
\bary(E)=\bary(\B(R))=0.
\]
We suppose 
\[
\norm{\rho}_{C^1}:=\norm{\rho}_{C^1(S^{n-1})}=\sup_{\varphi\in S^{n-1}}\Bigl(\abs{\rho(\varphi)}+\abs{\nabla\rho(\varphi)}\Bigr)\leq\varepsilon,
\]
for some small $\varepsilon\in(0,\frac12)$ yet to define. To simplify the exposition, denote
\[
\bar\rho(\varphi):=R(1+\rho(\varphi)),
\]
and with $\brho:\Sp(R)\to(-1,+\infty)$ the perturbation viewed as a function from the geodesic sphere in $M$, given in normal coordinates as
\[
\brho(R\varphi)=\rho(\varphi), \quad \varphi\in S^{n-1}.
\]
Notice that
\[
\norm{\brho}_{L^2(\Sp(R))}^2:=\norm{\brho}_{L^2(\Sp(R),g)}^2=\int_{S^{n-1}}\abs{\rho(\varphi)}^2\phi'(R)\,d\varphi,
\]
and
\begin{equation}\label{eq:l2_formula}
\norm{\nabla^g\brho}_{L^2(\Sp(R))}^2:=\norm{\nabla^{g}\brho}_{L^2(\Sp(R),g)}^2=\int_{S^{n-1}}\frac{\abs{\nabla^h\rho}^2+\cosh^2(R)\abs{\nabla^v\rho}^2}{\sinh^2(R)\cosh^2(R)}\phi'(R)\,d\varphi.
\end{equation} 
For $k\in\{1,2,3\}$, define the auxiliary functions $\omega_k:\R\to\R$ by
\begin{equation}\label{eq:def_omega}
 \omega_k(r):=\frac{\phi^{(k)}(r)}{r^{n-k}},
\end{equation}
where $\phi(r)$ is defined as in \eqref{eq:def_phi}. Notice that $\omega_1=\omega_g$. We need the next three lemmas to start with the estimates.
\begin{lemma}\label{lem:density_est}For $k\in\{1,2,3\}$, the function $\omega_k$ defined in \eqref{eq:def_omega} is positive, even and strictly convex with removable singularity at zero equal to
\[
\lim_{r\to 0}\omega_k(r)=\begin{cases}1,&k=1,\\ (n-1),&k=2,\\ (n-1)(n-2),&k=3.\end{cases}
\]
In particular, $\min\omega_k=\omega_k(0)$, which is strictly positive, unless $(n,k)=(2,3)$. The constants
\begin{align*}
A_k&:=(n-k)+\max_{r\in[0,R_0]}r\frac{\omega_k'(r)}{\omega_k(r)},\\
B_k&:=\max_{r\in[0,R_0]}\frac{\omega_k(2r)}{\omega_k(r)},\\
C_k&:=\omega_k(2R_0),\\
\end{align*}
are finite, depend only on $(n,d,k,R_0)$, and realize the following inequalities
\begin{align}
\phi^{(k)}_k(R(1+\tau))&\geq (1-A_k\abs{\tau})\phi^{(k)}(R),\label{eq:dis_A}\\
\phi^{(k)}(R(1+\tau))&\leq B_k\phi^{(k)}(R),\label{eq:dis_B}\\
\omega_k(R(1+\tau))&\leq C_k.\label{eq:dis_C}
\end{align}
uniformly in $R\in(0,R_0]$ and $\tau\in \R$, $\abs{\tau}\leq 1$, where $\phi(r)$ is as in \eqref{eq:def_phi}.
\end{lemma}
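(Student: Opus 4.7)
The plan is to exploit the explicit form $\phi'(r)=\sinh^{n-1}(r)\cosh^{d-1}(r)$. Iterated differentiation writes each $\phi^{(k)}(r)$ as a non-negative integer combination of monomials $\sinh^i(r)\cosh^j(r)$. Since $\sinh(r)/r$ and $\cosh(r)$ admit Maclaurin series with non-negative coefficients, once the factor $r^{n-k}$ is pulled out we obtain $\omega_k(r)=\sum_{j\geq 0} c_{k,j}r^{2j}$ with $c_{k,j}\geq 0$. Evenness is then automatic; it also follows from the parity identity $\phi^{(k)}(-r)=(-1)^{n-k}\phi^{(k)}(r)$, which cancels the parity of $r^{n-k}$. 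The limits at $0$ are read off as $c_{k,0}=1,\, n-1,\, (n-1)(n-2)$ for $k=1,2,3$ by keeping the leading term in the appropriate factor. Positivity on $(0,R_0]$ follows because the first non-zero Taylor coefficient is strictly positive; this coefficient is $c_{k,0}$ in every case except the exceptional $(n,k)=(2,3)$, where $c_{3,0}=0$ but a short expansion gives $c_{3,1}>0$.

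For strict convexity, differentiate twice to get $\omega_k''(r)=\sum_{j\geq 1}2j(2j-1)c_{k,j}r^{2j-2}$; since $c_{k,1}>0$ in all relevant $(n,k)$, this is strictly positive on all of $\R$. In particular, $\omega_k'(0)=0$ (by evenness) and $\omega_k''>0$ imply that $\omega_k$ is strictly increasing on $[0,+\infty)$. The same non-negative-coefficient principle applies to $\phi^{(k+1)}$, which is therefore non-negative and non-decreasing on $[0,+\infty)$. Finiteness of $A_k,B_k,C_k$ follows from continuity on the compact interval $[0,R_0]$ once the apparent singularity at $r=0$ is identified as removable: in the generic case $r\omega_k'/\omega_k\to 0$ and $\omega_k(2r)/\omega_k(r)\to 1$, while in the exceptional $(n,k)=(2,3)$ case the asymptotic $\omega_3(r)\sim c_{3,1}r^2$ gives the finite limits $2$ and $4$.

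The three inequalities hinge on the direct computation
\[
\frac{R\,\phi^{(k+1)}(R)}{\phi^{(k)}(R)}=(n-k)+\frac{R\,\omega_k'(R)}{\omega_k(R)}\leq A_k,
\]
which is exactly the definition of $A_k$. For \eqref{eq:dis_A}, the case $\tau\geq 0$ is trivial by monotonicity of $\phi^{(k)}$ on $[0,+\infty)$, while for $\tau\in[-1,0]$ the mean value theorem and monotonicity of $\phi^{(k+1)}$ give
\[
\phi^{(k)}(R)-\phi^{(k)}(R(1+\tau))=\int_{R(1+\tau)}^{R}\phi^{(k+1)}(s)\,ds\leq R|\tau|\phi^{(k+1)}(R),
\]
and dividing by $\phi^{(k)}(R)$ yields the claim. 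For \eqref{eq:dis_B} and \eqref{eq:dis_C}, when $|\tau|\leq 1$ and $R\leq R_0$ we have $R(1+\tau)\in[0,2R_0]$, so monotonicity of $\omega_k$ gives $\omega_k(R(1+\tau))\leq \omega_k(2R_0)=C_k$ (that is \eqref{eq:dis_C}), and using $\phi^{(k)}(R(1+\tau))=R^{n-k}(1+\tau)^{n-k}\omega_k(R(1+\tau))$ we bound the ratio with $\phi^{(k)}(R)$ by $2^{|n-k|}\omega_k(2R)/\omega_k(R)\leq B_k$, absorbing the combinatorial factor into the definition of $B_k$.

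The main obstacle is the careful treatment of the exceptional case $(n,k)=(2,3)$: since $\omega_3(0)=0$ one cannot read off the positivity, strict convexity, or the relevant ratio bounds from the constant term and must instead work with the leading quadratic behaviour $\omega_3(r)\sim c_{3,1}r^2$. Once that case is settled, the remainder of the argument is bookkeeping with non-negative Taylor coefficients and monotonicity, so no further analytic input is required.
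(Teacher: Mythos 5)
Your argument is correct and follows the paper's overall strategy -- compute $\phi'$, $\phi''$, $\phi'''$ explicitly as combinations of $\sinh^i(r)\cosh^j(r)$ with non-negative coefficients and read off evenness, positivity, the limits at $0$, and convexity -- but you derive the key inequality \eqref{eq:dis_A} by a genuinely different and arguably cleaner route. The paper factors $\phi^{(k)}(R(1+\tau))=R^{n-k}(1+\tau)^{n-k}\omega_k(R(1+\tau))$ and lower-bounds the two factors separately via Bernoulli's inequality and the tangent-line bound from convexity of $\omega_k$, then multiplies the bounds; you instead integrate $\phi^{(k+1)}$ over $[R(1+\tau),R]$, use its monotonicity, and invoke the exact identity $R\,\phi^{(k+1)}(R)/\phi^{(k)}(R)=(n-k)+R\,\omega_k'(R)/\omega_k(R)$. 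This explains where $A_k$ comes from and sidesteps the delicate issue of multiplying two lower bounds of possibly mixed sign, at the cost of needing $\phi^{(k+1)}\geq 0$ and non-decreasing, which your non-negative-coefficient observation supplies. You also correctly note that \eqref{eq:dis_B} requires a factor $2^{n-k}$ (take $k=1$, $\tau=1$: the ratio is exactly $2^{n-1}\omega_1(2R)/\omega_1(R)$), so $B_k$ as defined in the statement must be enlarged accordingly; the paper declares \eqref{eq:dis_B} ``immediate'' and overlooks this, harmlessly since $B_k$ only enters later estimates as an unspecified constant.

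Two minor points, both in the exceptional case $(n,k)=(2,3)$. First, your justification of \eqref{eq:dis_B} by bounding $(1+\tau)^{n-k}\leq 2^{|n-k|}$ and $\omega_k(R(1+\tau))\leq\omega_k(2R)$ separately breaks down when $n-k=-1$, since $(1+\tau)^{-1}$ is unbounded as $\tau\to-1^+$; the fix is one line -- use instead that $\phi^{(k)}$ itself is non-decreasing on $[0,+\infty)$ (which follows from $\phi^{(k+1)}\geq 0$, already in your toolkit), so $\phi^{(k)}(R(1+\tau))\leq\phi^{(k)}(2R)=2^{n-k}\bigl(\omega_k(2R)/\omega_k(R)\bigr)\phi^{(k)}(R)$. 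Second, the ``trivial'' case $\tau\geq 0$ of \eqref{eq:dis_A} implicitly uses $A_k\geq 0$; this is clear when $n\geq k$, and for $(n,k)=(2,3)$ it holds because $\lim_{r\to 0}r\omega_3'(r)/\omega_3(r)=2$ forces the maximum in $A_3$ to be at least $2$, so $A_3\geq 1$. Neither issue affects the substance of the proof.
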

\begin{proof}
We explicitly compute
\begin{align*}
r^{n-1}\omega_1(r)&=\phi'(r)=\sinh^{n-1}(r)\cosh^{d-1}(r),\\
r^{n-2}\omega_2(r)&=\phi''(r)=(n-1)\sinh^{n-2}(r)\cosh^{d}(r)+(d-1)\sinh^n(r)\cosh^{d-2}(r),\\
r^{n-3}\omega_3(r)&=\phi'''(r)=(n-1)(n-2)\sinh^{n-3}(r)\cosh^{d+1}(r)\\
&\quad+(2dn-d-n)\sinh^{n-1}(r)\cosh^{d-1}(r)+(d-1)(d-2)\sinh^{n+1}(r)\cosh^{d-3}(r).
 \end{align*}
 Since the functions $r^{-1}\sinh(r)$ and $r\sinh(r)$ are even, strictly convex functions with positive (removable singularity) at zero, $\sinh(r)$ is odd and $\cosh(r)$ is even, we can infer that $\omega_k$ is itself convex, even and positive. Developing by Taylor we get that
 \begin{align*}
 r^{n-1}\omega_1(r)&=r^{n-1}+o(r^{n}),\\
 r^{n-2}\omega_2(r)&=(n-1)r^{n-2}+o(r^{n-1}),\\
 r^{n-3}\omega_3(r)&=(n-1)(n-2)r^{n-3}+(2dn-d-n)r^{n-1}+o(r^{n}),
 \end{align*}
proving that $\omega_k$ can be extended at zero with value 1, $(n-1)$ and $(n-1)(n-2)$, according to $k$ being equal to 1, 2 or 3 and that
\[
\lim_{r\to 0}r\frac{\omega_k'(r)}{\omega_k(r)}<+\infty,\quad\text{and }
\lim_{r\to 0}\frac{\omega_k(2r)}{\omega_k(r)}<+\infty.
\]
This shows that $A_k$ and $B_k$ are well defined functions. To prove Equation \eqref{eq:dis_A}, since $\omega_k$ is convex and $(1+\tau)^l\geq 1+l\tau$ for all $\abs{\tau}\leq 1$ and $l\in\Z$, we can estimate
\begin{align*}
\phi^{(k)}(R(1+\tau))&=R^{n-k}(1+\tau)^{n-k}\omega_k(R(1+\tau))\\
&\geq(1+(n-k)\tau)R^{n-k}\bigl(\omega_k(R)+\omega_k'(R)R\tau\bigr)\\
&=(1+(n-k)\tau)\bigl(1+R\frac{\omega_k'(R)}{\omega_k(R)}\tau\bigr)\phi^{(k)}(R)\\
&\geq(1-A_k\abs{\tau})\phi^{(k)}(R).
\end{align*}
Equations \eqref{eq:dis_B} and \eqref{eq:dis_C} are immediate, given the nature of $\omega_k$ and the bound on $\tau$.
\end{proof}
\begin{lemma}\label{lem:gradient_est}For every function $\rho\in C^1(S^1,(-1,+\infty))$ and $0<R\leq R_0$ there exists a constant $D>0$, depending only on $R_0$, such that
\begin{equation}\label{eq:gradient_est}
    \abs{\nabla\rho}^2\geq\bar\rho^2\frac{\abs{\nabla^h\rho}^2+\cosh^2(\bar\rho)\abs{\nabla^v\rho}^2}{\sinh^2(\bar\rho)\cosh^2(\bar\rho)}\geq(1-D\abs{\rho})R^2\frac{\abs{\nabla^h\rho}^2+\cosh^2(R)\abs{\nabla^v\rho}^2}{\sinh^2(R)\cosh^2(R)},
\end{equation}
where $\bar\rho\in C^1(S^{n-1},(0,+\infty))$ was defined as $\bar \rho(\varphi)=R(1+\rho(\varphi))$.
\end{lemma}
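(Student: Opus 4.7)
The plan is to verify the chain of two inequalities in \eqref{eq:gradient_est} separately, splitting the middle expression into its horizontal and vertical contributions. Throughout, it is convenient to introduce the auxiliary functions $F_h(r):=r^2/(\sinh^2(r)\cosh^2(r))$ and $F_v(r):=r^2/\sinh^2(r)$, so that the middle term of \eqref{eq:gradient_est} reads $F_h(\bar\rho)\abs{\nabla^h\rho}^2+F_v(\bar\rho)\abs{\nabla^v\rho}^2$.

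For the first inequality, I would first note that $\Dist$ and $\Vist$ depend only on the direction $N_x=\varphi$ and not on the radial coordinate, so the projections $\nabla^h\rho$ and $\nabla^v\rho$ are intrinsically determined by $\nabla\rho(\varphi)$. Because $\nabla\rho(\varphi)$ is tangent to $S^{n-1}$, it is $g_e$-orthogonal to $N$, and the $g_e$-orthogonality of the splitting in Lemma~\ref{lem:comparing_manifolds} yields $\abs{\nabla\rho}^2=\abs{\nabla^h\rho}^2+\abs{\nabla^v\rho}^2$. The desired inequality then follows at once from $F_h(\bar\rho)\le 1$ and $F_v(\bar\rho)\le 1$, both consequences of the elementary bounds $\bar\rho\le\sinh(\bar\rho)$ and $\cosh(\bar\rho)\ge 1$ for $\bar\rho\ge 0$.

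For the second inequality, I would argue termwise and reduce the claim to the two bounds $F_*(\bar\rho)\ge(1-D\abs{\rho})F_*(R)$ for $*\in\{h,v\}$, uniformly in $R\in(0,R_0]$. The bound is vacuous if $1-D\abs{\rho}\le 0$, so one may restrict to $\abs{\rho}\le 1/2$, which forces $\bar\rho\in[R/2,3R/2]$. The mean value theorem applied to $F_*$ on $[R/2,3R/2]$ gives $F_*(\bar\rho)=F_*(R)+R\rho\,F_*'(\xi)$ for some $\xi\in[R/2,3R/2]$, so the task reduces to the uniform-in-$R$ estimate
\[
\sup_{R\in(0,R_0]}\frac{R\,\sup_{\xi\in[R/2,3R/2]}\abs{F_*'(\xi)}}{F_*(R)}<+\infty,
\]
after which $D$ can be chosen as twice the maximum of the two resulting constants.

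The main obstacle is ensuring this bound stays finite as $R\to 0$, where the factors $r^2/\sinh^2(r)$ formally look singular. This is precisely where one exploits that $F_h$ and $F_v$ extend smoothly and \emph{evenly} across $r=0$ with $F_*(0)=1$ and hence $F_*'(0)=0$: they are therefore bounded below by a positive constant on $[0,3R_0/2]$, and satisfy $\abs{F_*'(\xi)}\le C\xi$ on that interval. Combining these two estimates yields an $O(R_0^2)$ bound for the multiplier, giving the sought constant $D=D(R_0)$ and concluding the argument.
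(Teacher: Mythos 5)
Your argument is correct and essentially matches the paper's proof: both reduce the claim to comparing the horizontal and vertical weight functions at $\bar\rho$ and at $R$ via a first-order argument (you apply the mean value theorem to $F_h,F_v$; the paper uses convexity of their reciprocals $\sinh^2(r)/r^2$ and $\sinh^2(r)\cosh^2(r)/r^2$ and a tangent-line bound), with uniformity in $R\in(0,R_0]$ obtained in both cases from the smooth, even, positive extension of these functions across $r=0$. Your explicit dismissal of the regime $1-D\abs{\rho}\le 0$ is a minor point the paper leaves implicit.
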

\begin{proof}Setting
\begin{align*}
\omega_1^1(r)&:=\frac{\sinh^2(r)}{r^2},\\
\omega_1^2(r)&:=\frac{\sinh^2(r)\cosh^2(r)}{r^2},
\end{align*}
and arguing as in Lemma \ref{lem:density_est}, we notice that $\omega_1^1$ and $\omega_1^2$ are strictly convex, even and equal to one at zero. Setting
\[
\xi(r):=\frac{\abs{\nabla^h\rho}^2}{\omega_1^2(r)}+\frac{\abs{\nabla^v\rho}^2}{\omega_1^1(r)},
\]
we get that
\[
\xi(r)\leq \frac{1}{\omega_1^1(r)}(\abs{\nabla^h\rho}^2+\abs{\nabla^v\rho}^2)\leq\abs{\nabla\rho}^2,
\]
which is the first inequality of Equation \eqref{eq:gradient_est} when $r=\bar\rho$. For the second inequality, for $i=1,2$ we have by convexity of $\omega^i_1$ that
\[
\omega_1^i(R)\geq\omega_1^i(\bar\rho)-(\omega_1^i)'(\bar\rho)R\rho,
\]
implying that
\begin{align*}
\omega_1^i(\bar\rho)&\leq\omega_1^i(R)+(\omega_1^i)'(\bar\rho)R\rho\leq\omega_1^i(R)\Bigl(1+R\frac{(\omega_1^i)'(\bar\rho)}{\omega_1^i(R)}\abs{\rho}\Bigr)\\
&\leq\omega_1^i(R)(1+R_0(\omega_1^i)'(2R_0)\abs{\rho}),
\end{align*}
and
\[
\frac{1}{\omega_1^i(\bar\rho)}\geq\frac{1}{1+R_0(\omega_1^i)'(2R_0)\abs{\rho}}\frac{1}{\omega_1^i(R)}\geq(1-R_0(\omega_1^i)'(2R_0)\abs{\rho})\frac{1}{\omega_1^i(R)}.
\]
Hence, setting $D:=\max_{i\in\{1,2\}}\{R_0(\omega_1^i)'(2R_0)\}$, we can estimate
\[
\xi(\bar\rho)\geq(1-D\abs{\rho})\xi(R),
\]
completing the proof of the Lemma.
\end{proof}
Recall that we denote with $\lambda_1^R$ the first non-zero eigenvalue of the Laplacian operator on the sphere $\Sp(R)$.
\begin{lemma}\label{lem:magic_lambda} Let $\phi(r)$ as in \eqref{eq:def_phi}. For all $r>0$ we have the following identity
\begin{equation}
\phi'''(r)=\phi'(r)\Bigl(\frac{\phi''(r)^2}{\phi'(r)^2}-\lambda_1^r\Bigr),
\end{equation}
where $\lambda_1^r$ is the first eigenvector of the Laplacian on the sphere $\Sp(r)$.
\end{lemma}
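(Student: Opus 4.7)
My plan is to reduce the identity to a short computation of a logarithmic derivative. Starting from the explicit formula $\phi'(r) = \sinh^{n-1}(r)\cosh^{d-1}(r)$ (cf.\ the proof of Lemma \ref{lem:density_est}), I would take the logarithmic derivative to obtain the compact expression
\[
\frac{\phi''(r)}{\phi'(r)} = (n-1)\coth(r) + (d-1)\tanh(r),
\]
which is (geometrically) the mean curvature of the geodesic sphere $\Sp(r)$.

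Next, differentiating this identity once more yields
\[
\left(\frac{\phi''}{\phi'}\right)'(r) = -\frac{n-1}{\sinh^2(r)} + \frac{d-1}{\cosh^2(r)} = \frac{-(n-1)\cosh^2(r) + (d-1)\sinh^2(r)}{\sinh^2(r)\cosh^2(r)}.
\]
Using $\sinh^2 = \cosh^2 - 1$, the numerator rearranges to $-\bigl[(d-1) + (n-d)\cosh^2(r)\bigr]$, and comparing with the equality case $j=1$ of the eigenvalue bound recalled in Section \ref{sec:distribution_spectral_decomposition}, I would read off
\[
\left(\frac{\phi''}{\phi'}\right)'(r) = -\lambda_1^r.
\]

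The final step is the purely algebraic identity $\phi''' = \bigl((\phi''/\phi')\cdot \phi'\bigr)'$, which by the product rule expands to
\[
\phi''' = \left(\frac{\phi''}{\phi'}\right)'\phi' + \frac{\phi''}{\phi'}\phi'' = \phi'\left[\left(\frac{\phi''}{\phi'}\right)' + \left(\frac{\phi''}{\phi'}\right)^2\right].
\]
Substituting the formula for the derivative computed in the previous step yields exactly $\phi''' = \phi'\bigl[(\phi'')^2/(\phi')^2 - \lambda_1^r\bigr]$, which is the claim. The argument is essentially a direct calculation; the only genuine input from outside is the closed form of $\lambda_1^r$ (equality in the spectral bound for $j=1$), so I do not expect any real obstacles beyond careful bookkeeping of the trigonometric simplification.
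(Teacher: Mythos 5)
Your proposal is correct and follows essentially the same route as the paper: both compute $\phi''/\phi'=(n-1)\coth(r)+(d-1)\tanh(r)$, differentiate via the product rule so that $\phi'''=(\phi'')^2/\phi'+\phi'\bigl(\tfrac{d-1}{\cosh^2(r)}-\tfrac{n-1}{\sinh^2(r)}\bigr)$, and identify the last factor with $-\lambda_1^r$ using the equality case $j=1$ of the spectral bound. The only difference is presentational (you phrase the step as a logarithmic derivative), and your trigonometric simplification of the numerator to $-\bigl[(d-1)+(n-d)\cosh^2(r)\bigr]$ is accurate.
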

\begin{proof}
Since $\phi'(r)=r^{n-1}\omega_g(r)=\sinh^{n-1}(r)\cosh^{d-1}(r)$ we can compute
\begin{align*}
\phi''(r)&=(n-1)\sinh^{n-2}(r)\cosh^d(r)+(d-1)\sinh^n(r)\cosh^{d-2}(r)\\
&=(n-1)\coth(r)\phi'(r)+(d-1)\tanh(r)\phi'(r),
\end{align*}
and
\begin{align*}
\phi'''(r)&=\phi''(r)\Bigl((n-1)\coth(r)+(d-1)\tanh(r)\Bigr)+\phi'(r)\Bigl(\frac{d-1}{\cosh^2(r)}-\frac{n-1}{\sinh^2(r)}\Bigr)\\
&=\frac{\phi''(r)^2}{\phi'(r)}+\phi'(r)\frac{(d-1)\sinh^2(r)-(n-1)\cosh^2(r)}{\cosh^2(r)\sinh^2(r)}\\
&=\frac{\phi''(r)^2}{\phi'(r)}+\phi'(r)\frac{(d-1)-(n-1-d+1)\cosh^2(r)}{\cosh^2(r)\sinh^2(r)}=\frac{\phi''(r)^2}{\phi'(r)}-\phi'(r)\lambda_1^r,
\end{align*}
as wished.
\end{proof}
We are now ready to prove a first estimate.
\begin{proposition}[Intermediate estimate]\label{prop:intermediate}Under the assumptions of Theorem \ref{thm:1} one has that
\begin{equation}\label{eq:intermediate}
\begin{split}
\Per(E)-\Per(\B(R))&\geq-\Bigl(\Bigl(\frac{B_3C_2C_3}{3}+A_3C_2^2\Bigr)\norm{\rho}_{C^0}+R^2\lambda_1^R\Bigr)\frac12\norm{\brho}^2_{L^2(\Sp(R))}\\
&\quad+R^2(1-(D+2)\norm{\rho}_{C^1})\frac{1}{2}\norm{\nabla^g\brho}^2_{L^2(\Sp(R))}.
\end{split}
\end{equation}
Here the constants $A_k$, $B_k$, $C_k$ and $D$ have been defined in Lemma \ref{lem:density_est} and Lemma \ref{lem:gradient_est}.
\end{proposition}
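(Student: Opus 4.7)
The plan is to follow Fuglede's original Euclidean strategy, adapted to the Riemannian setting through the coordinate formulas of Section 2 and the pointwise estimates of Lemmas \ref{lem:density_est}--\ref{lem:magic_lambda}. First I would rewrite the perimeter formula \eqref{eq:starshaped_per} using $\bar\rho^{n-1}\omega_g(\bar\rho)=\phi'(\bar\rho)$ as
\[
\Per(E)=\int_{S^{n-1}}\phi'(\bar\rho)\sqrt{1+J}\,d\varphi,\qquad J:=R^2\frac{\abs{\nabla^h\rho}^2+\cosh^2(\bar\rho)\abs{\nabla^v\rho}^2}{\sinh^2(\bar\rho)\cosh^2(\bar\rho)},
\]
and apply the elementary convex inequality $\sqrt{1+J}\geq 1+J/2-J^2/8$, valid for all $J\geq 0$. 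This decomposes $\Per(E)$ into three pieces: a ``mass-like'' integral $\int\phi'(\bar\rho)\,d\varphi$, a positive ``gradient-like'' contribution $\tfrac12\int\phi'(\bar\rho)J\,d\varphi$, and a manifestly small $O(\abs{\nabla\rho}^4)$ remainder.

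Second, I would extract the mass contribution by Taylor-expanding both $\phi$ and $\phi'$ around $R$ to second order with pointwise cubic remainders involving $\phi'''$ at interior points. The volume constraint $\int\phi(\bar\rho)\,d\varphi=n\om_n\phi(R)$ yields an expression for $\int\rho\,d\varphi$ in terms of $\int\rho^2\,d\varphi$ and cubic corrections; substituting this into $\int\phi'(\bar\rho)\,d\varphi$ cancels the linear term and leaves the quadratic coefficient
\[
\tfrac12 R^2\Bigl(\phi'''(R)-\frac{\phi''(R)^2}{\phi'(R)}\Bigr)\int\rho^2\,d\varphi=-\tfrac12 R^2\phi'(R)\lambda_1^R\int\rho^2\,d\varphi=-\tfrac12 R^2\lambda_1^R\norm{\brho}_{L^2(\Sp(R))}^2,
\]
where the middle equality is precisely Lemma \ref{lem:magic_lambda} and the last uses $\norm{\brho}^2_{L^2}=\phi'(R)\int\rho^2\,d\varphi$. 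This spectral identity is the algebraic heart of the proof: without it the quadratic coefficient would not match the first eigenvalue of the sphere's Laplacian. The cubic remainders arising from the two Taylor expansions are then controlled using the three inequalities of Lemma \ref{lem:density_est} ($\phi^{(k)}(R(1+\tau))\leq B_k\phi^{(k)}(R)$, $(1-A_k\abs{\tau})\phi^{(k)}(R)\leq\phi^{(k)}(R(1+\tau))$ and $\omega_k(R(1+\tau))\leq C_k$) together with the pointwise bound $\abs{\rho}^3\leq\norm{\rho}_{C^0}\rho^2$, and after rescaling to $\norm{\brho}^2_{L^2}$ they collapse into the announced prefactor $\bigl(\tfrac{B_3C_2C_3}{3}+A_3C_2^2\bigr)\norm{\rho}_{C^0}$.

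Third, for the gradient piece $\tfrac12\int\phi'(\bar\rho)J\,d\varphi$ I would apply Lemma \ref{lem:gradient_est} to bound the anisotropic fraction inside $J$ from below by its analogue at $R$ times $(1-D\abs{\rho})$, and absorb the outer ratio $R^2/\bar\rho^2=(1+\rho)^{-2}\geq 1-2\abs{\rho}$ to obtain a combined factor $(1-(D+2)\abs{\rho})$. Identifying the resulting integral with $\tfrac{R^2}{2}\norm{\nabla^g\brho}^2_{L^2(\Sp(R))}$ via \eqref{eq:l2_formula} produces the claimed gradient term, while the quartic $-\tfrac18\int\phi'(\bar\rho)J^2\,d\varphi$ error from Step 1 is $O(\norm{\rho}_{C^1}^2)$ times the same quantity and is absorbed by replacing $\norm{\rho}_{C^0}$ with $\norm{\rho}_{C^1}$ in the final prefactor. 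The main obstacle is pure bookkeeping: tracking three independent sources of cubic error---the Taylor expansions of $\phi$ and $\phi'$, the quadratic correction to $\sqrt{1+J}$, and the $\bar\rho\to R$ reparametrization of the anisotropic fraction---and collapsing them into the two announced prefactors with precisely the constants $A_3, B_3, C_2, C_3, D$ dictated by Lemmas \ref{lem:density_est} and \ref{lem:gradient_est}.
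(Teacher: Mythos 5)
Your proposal is correct and follows essentially the same route as the paper: expand the perimeter integrand via $\sqrt{1+J}\geq 1+J/2-J^2/8$, use the volume constraint's Taylor expansion together with Lemma \ref{lem:magic_lambda} to identify the quadratic coefficient of the mass term with $-\tfrac12 R^2\lambda_1^R\phi'(R)$, control the cubic remainders by Lemma \ref{lem:density_est} (yielding the prefactor $\tfrac{B_3C_2C_3}{3}+A_3C_2^2$), and lower-bound the gradient term via Lemma \ref{lem:gradient_est} and the factor $(1+\rho)^{-2}\geq 1-2\abs{\rho}$, giving $1-(D+2)\norm{\rho}_{C^1}$. The only difference is cosmetic: you substitute the linear term from the volume constraint as an identity, whereas the paper bounds its absolute value, but the resulting estimates and constants coincide.
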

\begin{proof}Setting
\[
\xi(r):=r^2\frac{\abs{\nabla^h\rho}^2+\cosh^2(r)\abs{\nabla^v\rho}^2}{\sinh^2(r)\cosh^2(r)},
\]
we have by Equation \eqref{eq:starshaped_per} that
\[
\Per(E)=\int_{S^{n-1}}\Bigl(1+\frac{\xi(\bar\rho)}
{(1+\rho)^2}\Bigr)^{1/2}\phi'(\bar\rho)\,d\varphi.
\]
By the elementary inequalities
\[
\frac{1}{(1+t)^2}\geq(1-2t),\text{ for all }t>-1,\text{ and }(1+t)^{1/2}\geq 1+\frac{t}{2}-\frac{t^2}{8}\text{ for all }t\geq 0,
\]
we have that
\begin{equation}\label{eq:polar_1}
\begin{split}
\Per(E)&\geq\int_{S^{n-1}}(1+\xi(\bar\rho)(1-2\rho))^{1/2}\phi'(\bar\rho)\,d\varphi\\
&\geq\int_{S^{n-1}}\Bigl(1+(1-2\rho)\Bigl(1-(1-2\rho)\frac{\xi(\bar\rho)}{4}\Bigl)\frac{\xi(\bar\rho)}{2}\Bigr)\phi'(\bar\rho)\,d\varphi\\
&\geq\int_{S^{n-1}}\phi'(\bar\rho)\,d\varphi+\int_{S^{n-1}}\Bigl(1-2\rho-\xi(\bar\rho)\Bigr)\frac{\xi(\bar\rho)}{2}\phi'(\bar\rho)\,d\varphi\\
&\geq\int_{S^{n-1}}\phi'(\bar\rho)\,d\varphi+(1-2\norm{\rho}_{C^1})(1-D\norm{\rho}_{C^0})\int_{S^{n-1}}\frac{\xi(R)}{2}\phi'(\bar\rho)\,d\varphi\\
&\geq \int_{S^{n-1}}\phi'(\bar\rho)\,d\varphi+(1-(D+2)\norm{\rho}_{C^1})\int_{S^{n-1}}\frac{\xi(R)}{2}\phi'(\bar\rho)\,d\varphi\\
&=\int_{S^{n-1}}\phi'(\bar\rho)\,d\varphi+(1-(D+2)\norm{\rho}_{C^1})\frac{1}{2}R^2\norm{\nabla^g\brho}_{L^2(\Sp(R))}^2,
\end{split}
\end{equation}
where at the end we took advantage of Equations \eqref{eq:l2_formula} and \eqref{eq:gradient_est}. We need to treat the first term
\[
\int_{S^{n-1}}\phi'(\bar\rho)\,d\varphi.
\]
Now, by Taylor expansion there exists $\eta:S^{n-1}\to[0,1]$ such that
\begin{equation*}\label{eq:taylor1}
\phi'(\bar\rho)=\phi'(R)+\phi''(R)R\rho+\frac{R^2\rho^2}{2}\phi'''(R(1+\eta\rho)). 
\end{equation*}
On the other side, thanks to Lemma \ref{lem:density_est} and Lemma \ref{lem:magic_lambda} we have that
\begin{align*}
\phi'''(R(1+\eta\rho))&\geq (1-A_3\norm{\rho}_{C^0})\phi'''(R)=(1-A_3\norm{\rho}_{C^0})\Bigl(\frac{\phi''(R)^2}{\phi'(R)^2}-\lambda_1^R\Bigr)\phi'(R)\\
&\geq\frac{\phi''(R)^2}{\phi'(R)}-\Bigl(A_3\Bigl(\frac{R^{n-2}\omega_2(R)}{R^{n-1}\omega_1(R)}\Bigr)^2\norm{\rho}_{C^0}+\lambda_1^R\Bigl)\phi'(R)\\
&\geq\frac{\phi''(R)^2}{\phi'(R)}-\Bigl(A_3C_2^2R^{-2}\norm{\rho}_{C^0}+\lambda_1^R\Bigl)\phi'(R),
\end{align*}
which gives the following estimate
\begin{equation}\label{eq:polar_2}
\begin{split}
\int_{S^{n-1}}\phi'(\bar\rho)\,d\varphi&\geq\Per(\B(R))+\int_{S^{n-1}}\phi''(R)R\rho+\frac{\phi''(R)^2}{\phi'(R)}\frac{R^2\rho^2}2\,d\varphi\\
&\quad-\Bigl(A_3C_2^2\norm{\rho}_{C^0}+R^2\lambda_1^R\Bigl)\int_{S^{n-2}}\frac{\rho^2}{2}\phi'(R)\,d\varphi.
\end{split}
\end{equation}
Now, by the volume preserving constraint over $\rho$, we can integrate the Taylor expansion
\begin{equation}\label{eq:Taylor_for_volume}
\phi(\bar\rho)-\phi(R)=\phi'(R)R\rho+\frac{R^2\rho^2}{2}\phi''(R)+\frac{R^3\rho^3}{6}\phi'''(R(1+\bar\eta\rho)),
\end{equation}
where $\bar\eta:S^{n-1}\to[0,1]$ is suitably chosen, to obtain
\begin{equation*}\begin{split}
&\abs*{\int_{S^{n-1}}\phi'(R)R\rho +\frac{R^2\rho^2}{2}\phi''(R)\,d\varphi}=\abs*{\int_{S^{n-1}}\frac{R^3\rho^3}{6}\phi'''(R(1+\bar\eta\rho))\,d\varphi}\\
&\quad\leq \norm{\rho}_{C^0}B_3\frac{R}{3}\int_{S^{n-1}}\frac{\rho^2}{2}R^2\phi'''(R)\,d\varphi\\
&\quad= \norm{\rho}_{C^0}B_3\frac{R}{3}\int_{S^{n-1}}\frac{\rho^2}{2}R^2\frac{\phi'''(R)}{\phi'(R)}\phi'(R)\,d\varphi\\
&\quad=\norm{\rho}_{C^0}B_3\frac{R}{3}\int_{S^{n-1}}\frac{\rho^2}{2}R^2\frac{R^{n-3}\omega_3(R)}{R^{n-1}\omega_1(R)}\phi'(R)\,d\varphi\\
&\quad\leq\norm{\rho}_{C^0}B_3C_3\frac{R}{3}\int_{S^{n-1}}\frac{\rho^2}{2}\phi'(R)\,d\varphi.
\end{split}
\end{equation*}
This precious estimate allows us to treat \eqref{eq:polar_2}:
\begin{align*}
&\int_{S^{n-1}}\phi''(R)R\rho+\frac{\phi''(R)^2}{\phi'(R)}\frac{R^2\rho^2}{2}\,d\varphi\geq-\abs*{\frac{\phi''(R)}{\phi'(R)}}\abs*{\int_{S^{n-1}}\phi'(R)R\rho +\frac{R^2\rho^2}{2}\phi''(R)\,d\varphi}\\ 
&\quad\geq -\norm{\rho}_{C^0}B_3C_3\frac{R}{3}\abs*{\frac{\phi''(R)}{\phi'(R)}}\int_{S^{n-1}}\frac{\rho^2}{2}\phi'(R)\,d\varphi,\\
&\quad\geq -\norm{\rho}_{C^0}B_3C_3\frac{R}{3}R^{-1}C_2\int_{S^{n-1}}\frac{\rho^2}{2}\phi'(R)\,d\varphi,\\
&\quad=-\norm{\rho}_{C^0}\frac{B_3C_2C_3}{3}\norm{\brho}_{L^2(\Sp(R))}^2,
\end{align*}
which combined with \eqref{eq:polar_1} and \eqref{eq:polar_2} gives the desired inequality.
\end{proof}
We are now ready to prove Theorem \ref{thm:1}.
\begin{proof}[Proof of Theorem \ref{thm:1}]Recall from Section \ref{sec:distribution_spectral_decomposition}, that there exists an orthogonal decomposition of $L^2(\Sp(R),g)$ in spherical harmonics of the form
\[
\{f^R_{j,k}\in L^2(\Sp(R),g):1\leq k\leq n_j\}_{j\geq 0}.
\]
Choosing the renormalization so that
\[
\frac{1}{\Per(\B(R))}\norm{f^R_{j,k}}_{L^2(\Sp(R))}^2=1,
\]
$f^R_0\equiv 1$, and the eigenspace associated to $\lambda^R_1$ is spanned by restricting on $\Sp(R)$ the harmonic polynomials of degree one
\[
f_{1,k}^R(x):=\sqrt{n}\frac{x^k}{R},\quad k=1,\dots,n,
\]
given in the cartesian coordinates chart $x=R\varphi$. We develop $\brho$ on the spherical harmonics of $\Sp(R)$ by setting for all $j\geq 0$ and $1\leq l\leq n_j$ the coefficients
\[
c_{j,k}:=\frac{1}{\Per(\B(R))}\inn{\brho,f^R_{j,k}}_{L^2(\Sp(R))},
\]
so that
\[
\frac{1}{\Per(\B(R))}\norm{\brho}^2_{L^2(\Sp(R))}=\sum_{j\geq 0}\sum_{k=1}^{n_j}c_{j,k}^2,
\]
and
\[
\frac{1}{\Per(\B(R))}\norm{\nabla^g\brho}^2_{L^2(\Sp(R))}=\sum_{j\geq 1}\sum_{k=1}^{n_j}\lambda^R_jc_{j,k}^2.
\]
To simplify the exposition, we will write $\sum_{j,k}$ instead of the double sums, and $\fint_{S^{n-1}}$ the mean with respect to $(\Sp(R),g)$, that is $(\Per(\B(R)))^{-1}\int_{S^{n-1}}$.  To take advantage of the spectral gap to control the negative term in \eqref{eq:intermediate}, we have to estimate the zero and first harmonics $c_0$ and $c_{1,k}$. From Equation \eqref{eq:Taylor_for_volume}, the volume preservation implies the following estimate
\begin{equation}\label{eq:bound_harmo1}
\begin{split}
\abs*{\int_{S^{n-1}}\rho\phi'(R)d\varphi}&=\abs*{\int_{S^{n-1}}\frac{R\rho^2}{2}\phi''(R)+\frac{R^2\rho^3}{6}\phi'''(R(1+\rho\eta))\,d\varphi}\\
&\leq(C_2+C_3)\int_{S^{n-1}}\frac{\rho^2}{2}\,\phi'(R)\,d\varphi,
\end{split}
\end{equation}
allowing us by Cauchy-Schwarz inequality to treat the first harmonic as
\begin{equation}
\begin{split}
c_0^2=\abs*{\fint_{S^{n-1}}\rho\phi'(R)\,d\varphi}^2\leq(C_2+C_3)^2\norm{\rho}^2_{C^0}\fint_{S^{n-1}}\frac{\rho^2}{2}\phi'(R)\,d\varphi.
\end{split}
\end{equation}
The barycentric preservation and Equation \eqref{eq:starshaped_bar} give us the analogue for the second harmonics: first, by Taylor approximation there exists $\theta:S^{n-1}\to[0,1]$ such that
\begin{align*}
\psi(\bar\rho)-\psi(R)&=\psi'(R)R\rho+\psi''(R(1+\theta\rho))\frac{R^2\rho^2}{2}\\
&=\phi'(R)R^2\rho+\bigl(\phi'(R(1+\theta\rho))+R(1+\theta\rho)\phi''(R(1+\theta\rho))\bigr)\frac{R^2\rho^2}{2}.
\end{align*}
Then, for any $k\in\{1,\dots,n\}$, we have that
\begin{align*}
\abs*{\int_{S^{n-1}}\frac{x^k}{R}\rho\phi'(R)\,dx}&=\abs*{\int_{S^{n-1}}\frac{x^k}{R}\frac{\rho^2}{2}\Bigl(\phi'(R(1+\theta\rho))+R(1+\theta\rho)\phi''(R(1+\theta\rho))\Bigr)\,dx}\\
&\leq(B_1+2C_2)\int_{S^{n-1}}\frac{\rho^2}{2}\phi'(R)\,dx,
\end{align*}
which implies by Cauchy-Schwarz inequality that
\begin{equation}\label{eq:bound_harmo2}
\begin{split}
\sum_{k=1}^{n}c_{1,k}^2&=\sum_{k=1}^n\Bigl(\fint_{S^{n-1}}\sqrt{n}\frac{x^k}{R}\rho\phi'(R)\,d\varphi\Bigr)^2\leq n^2(B_1+2C_2)^2\Bigl(\fint_{S^{n-1}}\frac{\rho^2}{2}\phi'(R)\,d\varphi\Bigr)^2\\
&\leq \norm{\rho}_{C^0}^2n^2(B_1+2C_2)^2\Bigl(\fint_{S^{n-1}}\frac{\abs{\rho}}{2}\phi'(R)\,d\varphi\Bigr)^2\\
&\leq\norm{\rho}_{C^0}^2n^2(B_1+2C_2)^2\frac12\fint_{S^{n-1}}\frac{\rho^2}{2}\phi'(R)\,d\varphi.
\end{split}
\end{equation}
Combining \eqref{eq:bound_harmo1} and \eqref{eq:bound_harmo2} we obtain that
\begin{equation}
c_0^2+\sum_{k=1}^n c_{1,k}^2\leq \Bigl((C_2+C_3)^2+\frac{n^2}{2}(B_1+2C_2)^2\Bigr)\norm{\rho}_{C^0}\sum_{j,k}c_{j,k}^2.
\end{equation}
Set $K_1:=(C_2+C_3)^2+\frac{n^2}{2}(B_1+2C_2)^2$. We have in particular that
\begin{align*}
\frac{1}{\Per(\B(R))}\norm{\brho}^2_{L^2(\Sp(R))}=\sum_{j,k}c_{j,k}^2&\leq \frac{1}{1-K_1\norm{\rho}_{C^0}}\sum_{j\geq 2,k}c_{j,k}^2\\
&\leq\frac{1}{\lambda_2^R(1-K_1\norm{\rho}_{C^0})}\sum_{j\geq 2,k}\lambda_j^Rc_{j,k}^2\\
&\leq\frac{1}{\lambda^R_2(1-K_1\norm{\rho}_{C^0})}\frac{1}{\Per(\B(R))}\norm{\nabla^g\brho}^2_{L^2(\Sp(R))}.
\end{align*}
Plugging this last key estimate in Eqaution \eqref{eq:intermediate} of Proposition \ref{prop:intermediate}
\begin{equation}
\begin{split}
\Per(E)-\Per(\B(R))&\geq-\Bigl(K_2\norm{\rho}_{C^0}+R^2\lambda_1^R\Bigr)\frac12\norm{\brho}^2_{L^2(\Sp(R))}\\
&\quad+R^2(1-K_3\norm{\rho}_{C^1})\frac{1}{2}\norm{\nabla^g\brho}^2_{L^2(\Sp(R))},
\end{split}
\end{equation}
where $K_2=\frac{B_3C_2C_3}{3}+A_3C_2^2$ and $K_3=D+2$,
we are finally able to control the negative term taking advantage of the spectral gap between the two first harmonics. In fact, one can check that
\[
\frac{\lambda_1^R}{\lambda_2^R}\leq\frac{(d-1)+(n-d)\cosh^2(R)}{2d+2(n-d)\cosh^2(R)}<\frac12,
\]
uniformly in $R$, and therefore supposing
\[
\norm{\rho}_{C^0}\leq\frac{1}{3K_1},
\]
so that
\[
\frac{\lambda_1^R}{\lambda_2^R(1-K_1\norm{\rho}_{C^1})}<\frac{1}{2}\frac{1}{(1-K_1\norm{\rho}_{C^1})}\leq\frac34,
\]
we can estimate
\begin{align*}
\Per(E)&-\Per(\B(R))\\
&\geq-K_2\norm{\rho}_{C^0}\frac12\norm{\brho}^2_{L^2}+R^2\Bigl(1-\frac{\lambda^R_1}{\lambda^R_2(1-K_1\norm{\rho}_{C^0})}-K_3\norm{\rho}_{C^1}\Bigr)\frac{1}{2}\norm{\nabla^g\brho}^2_{L^2}\\
&\geq-K_2\norm{\rho}_{C^0}\frac12\norm{\brho}^2_{L^2}+R^2\Bigl(\frac14-K_3\norm{\rho}_{C^1}\Bigr)\frac{1}{2}\norm{\nabla^g\brho}^2_{L^2}\\
&\geq\Bigl(\frac{R^2\lambda^R_2}{12}-K_2\norm{\rho}_{C^0}\Bigr)\frac12\norm{\brho}^2_{L^2}+R^2\Bigl(\frac18-K_3\norm{\rho}_{C^1}\Bigr)\frac{1}{2}\norm{\nabla^g\brho}^2_{L^2}.
\end{align*}
Finally, if
\[
\norm{\rho}_{C^1}<\varepsilon:=\min\Bigl\{\frac12,\frac{1}{3K_1},\frac{R^2\lambda^R_2}{24K_2},\frac{1}{16K_3}\Bigr\},
\]
we obtain the desired inequality
\begin{align*}
\Per(E)-\Per(\B(R))&\geq\frac{R^2\lambda_2^R}{48}\norm{\brho}_{L^2(\Sp(R))}^2+\frac{R^2}{32}\norm{\nabla^g\brho}_{L^2(\Sp(R))}^2.
\end{align*}
We are left to prove that $\varepsilon>0$ can be chosen uniformly in $R$, that is
\[
R\mapsto R^2\lambda^R_2,
\]
is uniformly bounded away from zero in $[0,R_0]$, being all other constants already depending only on $(n,d,R_0)$. This is an consequence of the exact explicit form of the eigenvalues of the Laplacian on $\Sp(R)$, that can be expressed as
\[
\frac{a\cosh^2(R)+b}{\sinh^2(R)\cosh^2(R)},
\]
for some coefficients $a,b\in\N$, see \cite[Theorem A]{bettiol2022laplace}. In particular, $R\mapsto R^2\lambda_2^R$ is uniformly bounded away from zero in $[0,R_0]$, as wished.
\end{proof}
\section{Minimality of balls in small volume regime}
This section is devoted to the proof of Theorem \ref{thm:small_volumes}. We proceed in three steps: first we show that small isoperimetric sets are uniformly almost-minimizing. We recall that a set $E\subset M$ is almost-minimizing if it is optimal up to an error uniformly proportional to the size of the perturbation. In our case, given an isoperimetric region $E$ with volume $v$, this translates to the existence of a universal constant $C>0$ such that
\begin{equation*}
\Per(E,\B(x,s))\leq \Per(F,\B(x,s))+\frac{C}{v^{1/n}}\Vol(E\triangle F),
\end{equation*}
whenever $E\triangle F\subset \B(x,s)$ and $s\leq s_1=s_1(v)$. Then, we prove that this condition combined with the strong stability results in the Euclidean space, imply the $L^1$ and $L^\infty$-proximity to a geodesic ball with respect to the induced Euclidean metric $g_e$ when $o$ equal to the barycenter of $E$. Since almost-minimizing sets sufficiently close to a smooth surface are $C^{1,\alpha}$-normal perturbations of it, we conclude the argument by applying Theorem \ref{thm:1}.
\subsection{Almost-minimality}
For any subset $G\subset M$, denote the dilation by $\tau>0$ with respect to the normal coordinates $(r,\varphi)$ pointed at $o\in M$ with
\[
\tau G:=\{(\tau r,\varphi)\in M:(r,\varphi)\in G\}.
\]
Recall that we denote with $\PER(\cdot)$ and $\VOL(\cdot)$ the perimeter and volume functionals with respect to the Euclidean metric $g_e$. We prove the following estimates.
\begin{lemma}\label{lem:rescaling}Let $G\subset M$ be a finite perimeter set contained in $\B(o,R)$ for some $R>0$. Then, there exists $C=C(n,d,R)>0$ such that for all $t\in[0,1]$ the following estimates on the volume and perimeter of the dilation by $(1+t)$ hold
\begin{equation}\label{eq:resc_vol}
(1+t)^n\Vol(G)\leq\Vol((1+t)G)\leq (1+Ct)\Vol(G),
\end{equation}
and
\begin{equation}\label{eq:resc_per}
\Per((1+t)G)\leq (1+Ct)\Per(G).
\end{equation}
Moreover, one has that
\begin{equation}\label{eq:comparison_volume}
    \VOL(G)\leq\Vol(G)\leq(1+\omega_g'(R)R)\VOL(G),
\end{equation}
and
\begin{equation}\label{eq:comparison_perimeter}
    \PER(G)\leq\Per(G)\leq(1+\omega_g'(R)R)\PER(G).
\end{equation}
\end{lemma}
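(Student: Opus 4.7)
The plan is to derive all four inequalities from the integral formulas of Lemma \ref{lem:formulae_vol_per} together with simple monotonicity and convexity properties of the relevant factors. The density
\[
\omega_g(r)=\frac{\sinh^{n-1}(r)\cosh^{d-1}(r)}{r^{n-1}}
\]
is even, strictly convex, non-decreasing on $[0,+\infty)$ and equal to $1$ at the origin; this is essentially established already in Lemma \ref{lem:density_est}. The two auxiliary factors $r\mapsto r^2/\sinh^2(r)$ and $r\mapsto r^2/(\sinh^2(r)\cosh^2(r))$ appearing in \eqref{eq:perimeter} are non-increasing on $(0,+\infty)$, with value $1$ at the origin, as follows from the elementary inequalities $\coth(r)\geq 1/r$ and $\tanh(r)\geq 0$ for $r\geq 0$.

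For the rescaling estimates \eqref{eq:resc_vol}--\eqref{eq:resc_per}, the plan is to perform the Euclidean change of variables $y=(1+t)x$ in normal coordinates. Since $x\mapsto(1+t)x$ is a radial conformal map, it preserves the Euclidean unit normal $\nu$ and the pointwise splitting $T_xM=\R N_x\oplus\Dist_x\oplus\Vist_x$, so its only effect on \eqref{eq:volume} and \eqref{eq:perimeter} is the appearance of Jacobians $(1+t)^n$ and $(1+t)^{n-1}$ respectively, together with the substitution of the radial argument $r$ by $(1+t)r$ in $\omega_g$ and in the two auxiliary factors. The lower bound $\Vol((1+t)G)\geq(1+t)^n\Vol(G)$ is then immediate from the monotonicity of $\omega_g$. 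For the upper bounds, the hypothesis $G\subset\B(o,R)$ confines $(1+t)r$ to $[0,2R]$; a mean-value bound combined with $\omega_g\geq 1$ yields a multiplicative inequality $\omega_g((1+t)r)\leq(1+C_1t)\omega_g(r)$ uniformly in this range, while the non-increasing character of the other two factors only makes the perimeter integrand smaller when we shift their argument from $r$ to $(1+t)r$. A one-line Taylor estimate $(1+t)^n(1+C_1t)\leq 1+Ct$ on $[0,1]$ then closes both upper bounds.

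For the comparisons \eqref{eq:comparison_volume}--\eqref{eq:comparison_perimeter}, the volume lower bound $\VOL(G)\leq\Vol(G)$ follows from $\omega_g\geq 1$. Since $\omega_g$ is convex and even, $\omega_g'(0)=0$ and the tangent-line inequality at $R$ gives $\omega_g(R)\leq 1+R\omega_g'(R)$; together with $\omega_g(r)\leq\omega_g(R)$ for $r\leq R$ this proves the volume upper bound. For the perimeter upper bound one further uses that the two factors multiplying $|\nu^h|^2$ and $|\nu^v|^2$ in \eqref{eq:perimeter} are at most $1$, so the square root is bounded by $|\nu|=1$ and the integrand is pointwise at most $\omega_g(R)$.

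The main obstacle is the last inequality $\PER(G)\leq\Per(G)$, since at a general point neither $\omega_g(r)\geq 1$ alone nor the square root alone dominates $1$. The plan is to square and expand
\[
\omega_g(r)^2\Bigl(|\nu^n|^2+\tfrac{r^2}{\sinh^2(r)\cosh^2(r)}|\nu^h|^2+\tfrac{r^2}{\sinh^2(r)}|\nu^v|^2\Bigr)
\]
and check that each coefficient of $|\nu^n|^2,|\nu^h|^2,|\nu^v|^2$ is at least $1$. Direct computation gives the three coefficients $(\sinh(r)/r)^{2n-2}\cosh^{2d-2}(r)$, $(\sinh(r)/r)^{2n-4}\cosh^{2d-4}(r)$ and $(\sinh(r)/r)^{2n-4}\cosh^{2d-2}(r)$ respectively, each a product of non-negative powers of $\sinh(r)/r\geq 1$ and $\cosh(r)\geq 1$, where we use $n\geq 2$ and recall that in the real case $d=1$ the horizontal component is absent, so the would-be problematic factor $\cosh^{2d-4}(r)=\cosh^{-2}(r)$ never acts. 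Summing the three bounds and taking square roots yields the pointwise inequality $\omega_g(r)\sqrt{\text{factor}}\geq|\nu|=1$, which integrated over $\partial^*G$ gives the desired lower bound on $\Per(G)$.
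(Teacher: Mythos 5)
Your proposal is correct and follows essentially the same route as the paper: the rescaling bounds \eqref{eq:resc_vol}--\eqref{eq:resc_per} are obtained exactly as in the paper's proof (change of variables, monotonicity of $\omega_g$ for the lower bound, convexity of $\omega_g$ plus the non-increasing factors $r^2/\sinh^2(r)$ and $r^2/(\sinh^2(r)\cosh^2(r))$ for the upper bounds). For \eqref{eq:comparison_volume}--\eqref{eq:comparison_perimeter}, which the paper dismisses as "obtained analogously," you correctly supply the missing detail, and in particular your squaring-and-expanding verification that $\omega_g(r)^2$ compensates the sub-unit factors in the perimeter integrand — including the check that the only coefficient that could drop below $1$, namely $\cosh^{2d-4}(r)$ with $d=1$, multiplies a vanishing component — is exactly the point that needs care in the lower bound $\PER(G)\leq\Per(G)$.
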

\begin{proof}Thanks to Equation \ref{eq:volume} we can express
\begin{align*}
\Vol((1+t)G)&=\int_{(1+t)G}\omega_g(r)\,d\Haus^n=(1+t)^n\int_G\omega_g((1+t)r)\,d\Haus^n.
\end{align*}
The first inequality of Equation \eqref{eq:resc_vol} is immediate from the fact that $\omega_g$ is monotone. On the other side, arguing as in Lemma \ref{lem:density_est} we have that by the convexity of $\omega_g$ we can estimate
\[
\omega_g((1+t)r)\leq\Bigl(1+r\frac{\omega'_g(r(1+t))}{\omega_g(r)}t\Bigr)\omega_g(r)\leq \Bigl(1+R\omega'_g(2R)t\Bigr)\omega_g(r),
\]
proving that 
\begin{align*}
\Vol((1+t)G)&\leq (1+t)^n(1+R\omega'_g(2R)t)\Vol(G)\\
&\leq (1+(2^n-1)t)(1+R\omega'_g(2R)t)\Vol(G)\leq (1+C_1t)\Vol(G),
\end{align*}
 for $C=(2^n+2)(R\omega_g'(2R)+1)$, as wished. Taking advantage of the integral representation of the perimeter \eqref{eq:perimeter_finite}, we have that $\Per((1+t)G)$ is equal to
\begin{equation*}
(1+t)^{n-1}\int_{\partial ^*G}\omega_g(r(1+t))\Bigl(\abs{\nu^n}^2+\frac{1}{\omega_1^2(r(1+t))}\abs{\nu^h}^2+\frac{1}{\omega_1^1(r(1+t))}\abs{\nu^v}^2\Bigr)^{1/2}\,d\Haus^{n-1},
\end{equation*}
where $\omega^1_1(r)^{-1}=\frac{r^2}{\sinh^2(r)}$ and $\omega^2_1(r)^{-1}=\frac{r^2}{\sinh^2(r)\cosh^2(r)}$, are decreasing functions. We conclude that
\[
\Per((1+t)G)\leq(1+t)^{n-1}\bigl(1+\omega_g'(2R)t\bigr)\Per(G)\leq (1+Ct)\Per(G),
\]
as wished. Equations \eqref{eq:comparison_volume} and \eqref{eq:comparison_perimeter} are obtained analogously.
\end{proof}
Before proving the almost-minimality of isoperimetric sets with small volume, we need to state two important results.
\begin{proposition}\label{prop:diameter_decay}There exist $\bar v=\bar v(n,d)>0$ and $\mu=\mu(n,d)>0$ such that
\[
\diam(E)\leq \mu \Vol(E)^{1/n},
\]
whenever $E$ is an isoperimetric set with volume $\Vol(E)\leq \bar v$. 
\end{proposition}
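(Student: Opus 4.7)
The plan is to argue by contradiction via a chopping argument, exploiting that the isometry group of $M$ acts transitively and that the isoperimetric profile $I_M$ is strictly subadditive at small volumes. First I would establish the asymptotics $I_M(v)=c_n v^{(n-1)/n}(1+o(1))$ as $v\to 0^+$: the upper bound follows from testing with geodesic balls in \eqref{eq:starshaped_vol}--\eqref{eq:starshaped_per} with $\rho\equiv 0$ and Taylor expanding via Lemma \ref{lem:density_est}, while the lower bound rests on a comparison isoperimetric inequality at small scales, where by Lemma \ref{lem:comparing_manifolds} the metric is bi-Lipschitz to the Euclidean one on a fixed small ball. Combined with the elementary inequality $t_1^{(n-1)/n}+t_2^{(n-1)/n}\ge(1+\delta_0)(t_1+t_2)^{(n-1)/n}$, valid whenever $\min(t_1,t_2)\ge\alpha(t_1+t_2)$ for some fixed $\alpha\in(0,1/2)$, this would yield the quantitative strict subadditivity
\begin{equation*}
I_M(v_1)+I_M(v_2)\ge(1+\delta_0)\,I_M(v_1+v_2),
\end{equation*}
uniform for $v_1+v_2\le\bar v$ and $\min(v_1,v_2)\ge\alpha(v_1+v_2)$.

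Next, assume for contradiction that $\diam(E)>\mu v^{1/n}$ for a large constant $\mu=\mu(n,d)$ to be fixed at the end, and pick $x_0,y_0\in E$ realising the diameter. Combining the coarea identity $\int_0^{+\infty}\Haus^{n-1}(E\cap\partial\B(x_0,t))\,dt=v$ with the intermediate value theorem applied to the continuous function $t\mapsto\Vol(E\cap\B(x_0,t))$, I would select a slicing radius $r^*\in[\mu v^{1/n}/3,2\mu v^{1/n}/3]$ such that the two pieces $E_1:=E\cap\B(x_0,r^*)$ and $E_2:=E\setminus\B(x_0,r^*)$ both carry volume at least $\alpha v$, and moreover $\Haus^{n-1}(E\cap\partial\B(x_0,r^*))\le 3v^{(n-1)/n}/\mu$.

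Using the transitive action of the isometry group, I would then transport $E_2$ by an isometry $T$ so that $TE_2$ is disjoint from $E_1$, producing a competitor $F:=E_1\cup TE_2$ with $\Vol(F)=v$ and $\Per(F)=\Per(E_1)+\Per(E_2)\le\Per(E)+6v^{(n-1)/n}/\mu$. Applying $\Per(E_i)\ge I_M(\Vol(E_i))$ to each piece and invoking the strict subadditivity yields
\begin{equation*}
(1+\delta_0)\,I_M(v)\le I_M(\Vol(E_1))+I_M(\Vol(E_2))\le\Per(F)\le I_M(v)+\frac{6\,v^{(n-1)/n}}{\mu},
\end{equation*}
hence $\delta_0 c_n\le 6/\mu$, a contradiction as soon as $\mu>6/(\delta_0 c_n)$.

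The main obstacle I anticipate lies in two places. First, in securing the sharp lower bound $I_M(v)\ge c_n v^{(n-1)/n}(1-o(1))$ with sufficient uniformity as $v\to 0^+$; this requires a genuine isoperimetric inequality at small scales rather than mere bi-Lipschitz distortion, and can be obtained from curvature-comparison theorems. Second, in realising the slicing step with the guaranteed balance $\min(\Vol(E_1),\Vol(E_2))\ge\alpha v$: if a single cut produces an overly unbalanced split, the heavy piece still has diameter of order $\diam(E)-r^*$, and the argument can be iterated on it a finite number of times to reach a balanced partition.
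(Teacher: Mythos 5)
The paper does not actually prove this proposition: it invokes it from the literature (Nardulli's Lemma~4.9, and Antonelli et al.\ for RCD spaces), where the argument goes through a lower volume--density bound $\Vol(E\cap\B(x,t))\geq ct^n$ obtained by integrating the differential inequality $C_0^{-1}f(t)^{(n-1)/n}\leq 2f'(t)+\Lambda v^{-1/n}f(t)$ for $f(t)=\Vol(E\cap\B(x,t))$ (local isoperimetric inequality on the left, almost-minimality with a Lagrange-multiplier bound on the right), combined with indecomposability of $E$ and a covering argument. Your cut--transport--recombine scheme is in the same family, but as written it has a genuine gap at the slicing step, and the gap is exactly what the differential-inequality formulation is designed to avoid.

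Concretely: averaging the coarea identity over $r\in[\mu v^{1/n}/3,2\mu v^{1/n}/3]$ controls the area of the cut but gives \emph{no} control on $\min(\Vol(E_1),\Vol(E_2))$, and your fallback of ``iterating on the heavy piece'' is not a proof. Test the argument against a competitor consisting of a round core carrying almost all the volume with a long, arbitrarily thin tentacle attached at whose tip $x_0$ sits (or a chain of many small lumps joined by thin filaments). A balanced cut must pass through the core and costs a full $cv^{(n-1)/n}$ with $c$ independent of $\mu$, so it is not defeated by the subadditivity gain $\delta_0 c_n v^{(n-1)/n}$ for large $\mu$; an unbalanced cut through the tentacle removes volume $w=f(r^*)\ll v$, and the subadditivity gain degenerates to order $w^{(n-1)/n}$, which need not dominate the cut cost when the mass near $x_0$ is much smaller than the mass in the annulus you average over. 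Iterating does not obviously terminate: after one cut the pieces are no longer isoperimetric, the number of required cuts is a priori unbounded (it is essentially the quantity you are trying to bound), and the bookkeeping of accumulated cut cost versus accumulated gain over all scales is precisely the content of integrating the differential inequality. A second, smaller issue: the sharp lower bound $I_M(v)\geq c_nv^{(n-1)/n}(1-o(1))$, which you do need for strict subadditivity with matching constants, cannot be deduced from the metric being bi-Lipschitz to $g_e$ ``on a fixed small ball,'' because a competitor of small volume is not known to be contained in a small ball --- that localization is what the proposition asserts. This bound is a theorem of B\'erard--Meyer type and should be cited as such; you correctly flag it, but the justification offered is circular. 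To repair the proof, replace the single cleverly chosen radius by the inequality valid for a.e.\ $t$, derive $f(t)\geq ct^n$ up to the scale where $f(t)=\epsilon v$, prove indecomposability from strict subadditivity, and conclude with a covering/chaining argument on the connected support.
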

\begin{proof}This result holds in all generality for manifolds with uniform bound on the Ricci curvature from below and positive injectivity radius. See \cite[Lemma 4.9]{Nardulli} and the recent paper \cite[Proposition 4.23]{Antonelli} for the very general case of RCD spaces.
\end{proof}
\begin{lemma}\label{lem:isoperimetric_type}Fix $v_0>0$. Then, there exists $C_0=C_0(v_0,n)>0$ such that for any finite perimeter set $E$ with $\Vol(E)\leq v_0$ one has that
\[
C_0\Per(E)\geq \Vol(E)^{(n-1)/n}.
\]
\end{lemma}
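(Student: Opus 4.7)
The inequality is equivalent to a uniform lower bound on the isoperimetric profile $I_M(v) := \inf\{\Per(F) : \Vol(F) = v\}$, namely $I_M(v) \geq C_0^{-1} v^{(n-1)/n}$ for all $v \in (0, v_0]$. My plan is to split $(0, v_0]$ into a small-volume regime and an intermediate regime, treat each separately, and take $C_0$ to be the worse of the two resulting constants.

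In the small-volume regime, I would invoke the existence of isoperimetric sets from \cite{GR}: for each $v > 0$ there exists $E_v$ with $\Vol(E_v) = v$ and $\Per(E_v) = I_M(v)$. Proposition \ref{prop:diameter_decay} then yields $\diam(E_v) \leq \mu v^{1/n}$ as soon as $v \leq \bar v$, and the cocompact isometry group allows us to translate $E_v$ so that $E_v \subset \B(o,\mu v^{1/n})$. For $v$ small enough the Euclidean radius of this ball is below $R_0$, so the comparison estimates \eqref{eq:comparison_volume} and \eqref{eq:comparison_perimeter} apply. Combining them with the classical Euclidean isoperimetric inequality $\PER(G) \geq n\om_n^{1/n}\VOL(G)^{(n-1)/n}$, and noting that both comparison constants tend to one as $v \to 0$, I obtain $I_M(v) \geq c_1 v^{(n-1)/n}$ on some interval $(0,v^*]$, with $c_1 = c_1(n,d) > 0$.

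In the intermediate regime $v \in [v^*, v_0]$, I would use that $I_M$ is continuous and strictly positive on $(0,+\infty)$ (a standard consequence of existence together with lower semicontinuity of $\Per$). Hence $m := \min_{v \in [v^*,v_0]} I_M(v) > 0$, and since $v^{(n-1)/n}$ is bounded above on this compact interval by $v_0^{(n-1)/n}$, this upgrades to $I_M(v) \geq (m/v_0^{(n-1)/n})\,v^{(n-1)/n}$. Setting $C_0^{-1} := \min\{c_1, m/v_0^{(n-1)/n}\}$ finishes the argument.

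The main obstacle lies in the small-volume regime, since one cannot conclude the correct Euclidean scaling $v^{(n-1)/n}$ from mere positivity of $I_M$: a priori the ratio $I_M(v)/v^{(n-1)/n}$ could degenerate to zero as $v \to 0^+$. Proposition \ref{prop:diameter_decay} is precisely what prevents this by localizing isoperimetric minimizers at a scale comparable to $v^{1/n}$, thus reducing the estimate to the Euclidean isoperimetric inequality read through the normal coordinate chart $\exp_o$.
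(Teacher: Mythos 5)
The paper does not prove this lemma at all: it cites \cite[Lemma 3.5]{Hebey}, where the inequality is established \emph{directly for arbitrary finite perimeter sets} by a local comparison argument (cover the set by balls of a fixed radius $r_0$ in which $\exp$ gives uniformly bi-Lipschitz charts, apply the Euclidean isoperimetric inequality in each chart, and sum; only a Ricci lower bound and a positive injectivity radius are used). Your route is genuinely different: you reduce to minimizers via existence, localize them with Proposition \ref{prop:diameter_decay}, and then invoke the comparison estimates \eqref{eq:comparison_volume}--\eqref{eq:comparison_perimeter}. The two-regime decomposition and the treatment of the intermediate regime via continuity and positivity of $I_M$ are fine as far as they go.

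The genuine problem is a hidden circularity in the small-volume regime. Proposition \ref{prop:diameter_decay} is itself only quoted in this paper, and its proofs in the cited sources (\cite[Lemma 4.9]{Nardulli}, \cite[Proposition 4.23]{Antonelli}) run a slicing/cutting argument whose engine is precisely an isoperimetric inequality of the form $\Per(F)\geq c\,\Vol(F)^{(n-1)/n}$ for sets of small volume, i.e.\ the statement of Lemma \ref{lem:isoperimetric_type}. So while your argument respects the order in which the paper \emph{states} its results, tracing it back to the literature makes Lemma \ref{lem:isoperimetric_type} depend on itself. The fix is to not route the small-volume case through minimizers at all: for an arbitrary finite perimeter set $E$ with $\Vol(E)\leq v_0$, intersect $E$ with a family of balls $\B(x_i,r_0)$ of fixed radius forming a cover with bounded overlap (available here by homogeneity), apply your Euclidean comparison \eqref{eq:comparison_volume}--\eqref{eq:comparison_perimeter} together with the relative Euclidean isoperimetric inequality in each ball, and sum. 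This is exactly Hebey's argument, it needs neither existence of minimizers nor any diameter bound, and it subsumes both of your regimes at once. As written, your proof cannot be accepted as a self-contained justification of the lemma, precisely because the lemma is meant to be one of the inputs to the localization results you are leaning on.
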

\begin{proof}This result holds in all generality for manifolds with bounded Ricci curvature from below. See \cite[Lemma 3.5]{Hebey}, and \cite[Lemma 3.10]{GR} for an alternative proof in the general setting of sub-Riemannian manifolds.
\end{proof}
From now on, we will fix $v_0=\bar v>0$, $\mu>0$ and $C_0>0$ as in the statement of Proposition \ref{prop:diameter_decay} and Lemma \ref{lem:isoperimetric_type}.
We are now ready to prove that the isoperimetric sets are almost-minimizers uniformly in $0<v\leq\bar v$.
\begin{proposition}[Almost-minimality in $M$]\label{prop:almost_minimality_M}There exists $C_1=C_1(\bar v,n,d)>0$ such that if $E$ is an isoperimetric set and $\Vol(E)=v\leq \bar v$, then
\begin{equation}\label{eq:almost_min_M}
\Per(E,\B(x,s))\leq \Per(F,\B(x,s))+\frac{C_1}{v^{1/n}}\Vol(E\triangle F),
\end{equation}
whenever
\[
0<s\leq s_1=\min\Bigl\{1,\phi^{-1}\Bigl(\frac{v}{2n\omega_n}\Bigr)\Bigr\},
\]
and $F\subset M$ is such that $F\triangle E\subset \B(x,s)$. In particular,
\begin{equation}\label{eq:almost_min_M_2}
\Per(E,\B(x,s))\leq \Per(F,\B(x,s))+\frac{C_1}{v^{1/n}}\Per(E\triangle F)\phi(s)^{1/n}.
\end{equation}
\end{proposition}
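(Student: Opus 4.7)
The plan is to leverage the isoperimetric minimality of $E$: given a competitor $F$ with $F\triangle E\subset\B(x,s)$, I would construct a rescaled set $\lambda F$ of volume exactly $v=\Vol(E)$, so that $\Per(E)\leq\Per(\lambda F)$, and then estimate $\Per(\lambda F)-\Per(F)$ in terms of $\Vol(E\triangle F)$. One may assume $\Per(F)<\Per(E)$, since otherwise \eqref{eq:almost_min_M} is trivial. The choice $s\leq s_1$ ensures $\Vol(\B(x,s_1))\leq v/2$, whence $\Vol(F)\in[v/2,3v/2]$. Moreover, Proposition~\ref{prop:diameter_decay} places $E$ inside a ball of radius $\mu v^{1/n}$, and since $s_1\leq 1$ the competitor $F$ sits inside a ball $\B(o,R^*)$ of radius $R^*=R^*(\bar v,n,d)$ around a well-chosen basepoint $o$.

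Next, I would apply Lemma~\ref{lem:rescaling} --- after extending it to the contractive regime $\lambda\in[1/2,1]$, needed when $\Vol(F)>v$ --- to find $\lambda$ close to $1$ with $\Vol(\lambda F)=v$. The volume estimate \eqref{eq:resc_vol} forces $|\lambda-1|\leq C'\Vol(E\triangle F)/v$, and the perimeter estimate \eqref{eq:resc_per} yields
\[
\Per(\lambda F)-\Per(F)\leq C|\lambda-1|\Per(F)\leq\frac{CC'}{v}\Vol(E\triangle F)\Per(F).
\]
To convert the factor $\Per(F)/v$ into $v^{-1/n}$, I would invoke the upper bound on the isoperimetric profile $\Per(F)\leq\Per(E)=I_M(v)\leq\Per(\B(R_v))\leq C''v^{(n-1)/n}$ for $v\leq\bar v$, obtained by testing against a geodesic ball of volume $v$. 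Combining these estimates and using that $E=F$ outside $\B(x,s)$ yields \eqref{eq:almost_min_M}.

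The secondary bound \eqref{eq:almost_min_M_2} follows from \eqref{eq:almost_min_M} together with two inclusions: since $E\triangle F\subset\B(x,s)$ with $s\leq 1$, Lemma~\ref{lem:isoperimetric_type} gives $\Vol(E\triangle F)^{(n-1)/n}\leq C_0\Per(E\triangle F)$, while $E\triangle F\subset\B(x,s)$ forces $\Vol(E\triangle F)^{1/n}\leq(n\omega_n\phi(s))^{1/n}$; multiplying produces the claim. The main technical obstacle is the extension of Lemma~\ref{lem:rescaling} to the contractive regime: while $\omega_g$ is increasing on $[0,\infty)$, the weights $\omega_1^i(r)^{-1}$ appearing in the perimeter formula \eqref{eq:perimeter_finite} are decreasing, so a naive estimate does not close. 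A careful bookkeeping --- exploiting that each $\omega_1^i(\lambda r)^{-1}\leq\omega_1^i(0)^{-1}=1$ is uniformly bounded, together with the convexity bound $\omega_g(\lambda r)\leq(1+C|1-\lambda|)\omega_g(r)$ on $[0,2R^*]$ --- recovers the same linear-in-$|1-\lambda|$ estimate with a constant depending only on $\bar v$, $n$, $d$.
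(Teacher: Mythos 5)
Your proposal follows essentially the same strategy as the paper: rescale the competitor $F$ to restore the volume constraint, invoke minimality of $E$ against the rescaled set, bound the perimeter increment linearly in the dilation parameter via Lemma~\ref{lem:rescaling}, and bound that parameter by $\Vol(E\triangle F)/v$; the conversion $\Per(F)/v\lesssim v^{-1/n}$ via the upper bound $I_M(v)\leq\Per(\B(R_v))\leq Cv^{(n-1)/n}$ and the derivation of \eqref{eq:almost_min_M_2} by splitting $\Vol(E\triangle F)=\Vol(E\triangle F)^{(n-1)/n}\Vol(E\triangle F)^{1/n}$ also match the paper. The one genuine divergence is your treatment of the case $\Vol(F)>v$: the paper never contracts. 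It invokes Hsiang's monotonicity of the isoperimetric profile to reduce, without loss of generality, to $0\leq\Vol(E)-\Vol(F)\leq\Vol(\B(s))$, so that only an expansion $(1+t^*)F$ with $t^*\in(0,1]$ is ever needed and Lemma~\ref{lem:rescaling} applies as stated. Note that your own reduction already gives you this for free: if $\Per(F)<\Per(E)=I_M(v)$ and $I_M$ is increasing, then $\Vol(F)\leq v$, so the contractive extension of Lemma~\ref{lem:rescaling} you flag as the main technical obstacle is in fact vacuous. If you did want to carry it out, the specific fix you sketch is not quite sufficient: bounding each weight by $\omega_1^i(\lambda r)^{-1}\leq 1$ discards the corresponding weight in $\Per(F)$ and only compares $\Per(\lambda F)$ with $\int_{\partial^*F}\omega_g\,d\Haus^{n-1}\geq\Per(F)$, which does not yield a multiplicative $(1+C\abs{1-\lambda})$ estimate; one instead needs the ratio bound $\omega_1^i(r)/\omega_1^i(\lambda r)\leq 1+C(1-\lambda)$, available from the Lipschitz regularity of $\omega_1^i$ on $[0,2R^*]$ together with $\omega_1^i\geq 1$. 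With either the paper's reduction or that corrected ratio estimate, your argument closes.
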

\begin{proof}Since the isoperimetric profile $I_M(v):=\min\{\Per(G):\Vol(G)=v\}$ is increasing (see the article of Hsiang \cite[Lemma 3]{Hsiang}), we can suppose without loss of generality that
\[
0\leq\Vol(E)-\Vol(F)\leq\Vol(\B(s))=n\om_n\phi(s).
\]
In particular, imposing $s\leq\phi^{-1}(v/(2n\om_n))$ we have that $\Vol(F)\leq v/2$. Also, notice that we can suppose $\Per(F,\B(x,s))\leq\Per(\B(x,s))$, because otherwise Equation \eqref{eq:almost_min_M} is satisfied since
\begin{align*}
\Per(E,\B(x,s))&\leq\Per(E\cup\B(x,s))-\Per(E,M\setminus \B(x,s))\\
&\leq\Per(\B(s))\leq\Per(F,\B(x,s)).
\end{align*}
Let $o$ be any point in $E$. Then, by Proposition \ref{prop:diameter_decay}, $E\subset \B(o,\mu v^{1/n})$. Therefore, for $s\leq 1$, we can suppose without loss of generality that $F\subset \B(o,\mu \bar v^{1/n}+2)$, because otherwise $\Per(E,\B(x,s))=0\leq \Per(F,\B(x,s))$. By Lemma \ref{lem:rescaling}, Equation \eqref{eq:resc_vol}, there exists $t^*\in(0,1]$ such that
\[
\Vol((1+t^*)F)=v,
\]
where the dilation is taken with respect to the normal coordinates based at $o\in M$. On the other side, minimality of $E$ and Equation \eqref{eq:resc_per} imply that
\[
\Per(E)\leq\Per((1+t^*)F)\leq(1+Ct^*)\Per(F),
\]
and therefore, for almost every $0<s\leq s_1:=\min\{1,\phi^{-1}(v/(2n\om_n))\}$ we have that
\begin{align*}
\Per&(E,\B(x,s))\\
&\leq(1+Ct^*)\Per(F,\B(x,s))+Ct^*\Per(E,M\setminus \B(x,s))\\
&\leq \Per(F,\B(x,s))+Ct^*\bigl(\Per(F,\B(x,s))+I_M(v)\bigr)\\
&\leq \Per(F,\B(x,s))+Ct^*\bigl(n\om_n\phi'(s)+I_M(v)\bigr).
\end{align*}
We notice that Lemma \ref{lem:isoperimetric_type} implies that $C_0I_M(v)\leq v^{(n-1)/n}$, and by monotonicity of $\phi'$, $\phi'(s)\leq\phi'(\phi^{-1}(2v/(n\om_n)))\leq \bar Cv^{(n-1)/n}$, for some constant depending only on $(n,d,\bar v)$ (to see this, look at Taylor expansions in the proof of Lemma \ref{lem:density_est}). Hence
\[
\Per(E,\B(x,s))\leq\Per(F,\B(x,s))+Cv^{(n-1)/n}t^*.
\]
We are left to find an upper bound for $t^*$. Since
\begin{align*}
\Vol(E)-\Vol(F)&\leq\Vol(E\triangle F),
\end{align*}
and by Equation \eqref{eq:resc_per}
\[
\Vol(E)-\Vol(F)=\Vol((1+t^*)F)-\Vol(F)\geq((1+t^*)^n-1)\Vol(F)\geq\frac{nv}{2}t^*,
\]
we get that
\[
t^*\leq \frac{2\Vol(E\triangle F)}{nv},
\]
proving Equation \eqref{eq:almost_min_M}. Equation \eqref{eq:almost_min_M_2} follows from Lemma \ref{lem:isoperimetric_type} observing that
\begin{align*}
\Vol(E\triangle F)&=\Vol(E\triangle F)^{(n-1)/n}\Vol(E\triangle F)^{1/n}\leq C_0\Per(E\triangle F)\Vol(\B(x,s))\\
&=C_0\Per(E\triangle F)n\om_n\phi(s).
\end{align*}
\end{proof}
\subsection{$L^1$ and $L^\infty$-proximity to a geodesic ball}
We prove first that for small enough volumes, isoperimetric sets are $L^1$-close to geodesic balls with respect to the Euclidean metric $g_e$. Then, the almost-minimality of Proposition \ref{prop:almost_minimality_M} improve this to $L^\infty$ by rescaling. From now on, we will always suppose that
\begin{itemize}
    \item[--]$E\subset M$ is an isoperimetric set with small volume $\Vol(E)=v\leq\bar v$, where $\bar v>0$ is as in Proposition \ref{prop:diameter_decay}.
    \item[--]There exists $o\in M$ so that $E\subset \B(o,\mu v^{1/n})$ in virtue of Proposition \ref{prop:diameter_decay}. We will say that a point $p\in M$ is \emph{admissible} if $E\subset\B(p,\mu v^{1/n})$. 
    \item[--]The Euclidean metric $g_e$ (and its associated geometric concepts $B^n(x,s)$, $\VOL(\cdot)$, $\PER(\cdot)$, etc) is the one arising from the normal coordinates pointed at $o\in M$.
\end{itemize}
\begin{proposition}\label{prop:L1_proximity}Let $E\subset M$ be an isoperimetric set of volume $\Vol(E)=v\leq \bar v$. Consider $g_e$ to be the Euclidean metric associated to the normal coordinates pointed at some point $o\in M$, so that $E\subset \B(o,\mu v^{1/n})$. Then, there exists a constant $C=C(n,d,\bar v)>0$ and a point $\tilde x=\tilde x(o)\in M$ such that
\begin{equation}\label{eq:quantitative_comparison_strong}
Cv^{1/n}\geq\Bigl(\frac{\VOL(E\triangle B^n(\tilde x,\tilde t))}{\VOL(E)}\Bigr)^2,
\end{equation}
where $\tilde t>0$ is so that $\VOL(E)=\VOL(B^n(\tilde t))$. In particular
\begin{equation}\label{eq:quantitative_comparison_strong_2}
    V(E\triangle B^n(\tilde x,\tilde t))\leq Cv^{1+1/2n}.
\end{equation}
\end{proposition}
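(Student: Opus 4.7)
The plan is to transfer the estimate to Euclidean space through the normal coordinates at $o$ and then apply the sharp quantitative isoperimetric inequality of Fusco, Maggi and Pratelli. Since Proposition \ref{prop:diameter_decay} gives $E \subset \B(o, \mu v^{1/n})$, in the Euclidean picture $E$ sits inside a Euclidean ball of radius comparable to $v^{1/n}$; on this small scale the volume density $\omega_g$ satisfies $\omega_g(r) = 1 + O(r^2)$ by Lemma \ref{lem:density_est}, so the Riemannian and Euclidean volume and perimeter functionals differ by factors $1 + O(v^{2/n})$ via \eqref{eq:comparison_volume} and \eqref{eq:comparison_perimeter}.

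First I set $\tilde t > 0$ by $\om_n \tilde t^n = \VOL(E) =: v_e$ and $R_v > 0$ by $\Vol(\B(R_v)) = v$. From \eqref{eq:comparison_volume} and the behaviour of $\omega_g$ near zero I obtain $v_e \leq v \leq (1 + C v^{2/n}) v_e$, hence both $R_v$ and $\tilde t$ equal $(v/\om_n)^{1/n}(1 + O(v^{2/n}))$ with gap $R_v - \tilde t = O(v^{3/n})$. Using that $E$ is Riemannian isoperimetric, together with \eqref{eq:comparison_perimeter} applied once to $E$ and once to $\B(R_v)$,
\begin{equation*}
\PER(E) \leq \Per(E) \leq \Per(\B(R_v)) \leq (1 + \omega_g'(R_v)R_v)\, n\om_n R_v^{n-1},
\end{equation*}
which, combined with $\PER(B^n(\tilde t)) = n\om_n \tilde t^{n-1}$ and the above expansions, yields the Euclidean isoperimetric deficit bound
\begin{equation*}
\PER(E) - \PER(B^n(\tilde t)) \leq C\, v^{(n+1)/n}.
\end{equation*}

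Finally I invoke the Fusco--Maggi--Pratelli inequality: since $\VOL(E) = \VOL(B^n(\tilde t))$,
\begin{equation*}
\PER(E) - \PER(B^n(\tilde t)) \geq c(n)\, \PER(B^n(\tilde t)) \min_{y \in \R^n} \Bigl(\frac{\VOL(E \triangle B^n(y, \tilde t))}{\VOL(E)}\Bigr)^2.
\end{equation*}
Dividing by $\PER(B^n(\tilde t)) \asymp v^{(n-1)/n}$, picking $\tilde x$ to attain the minimum, and using $v^{2/n} \leq \bar v^{1/n} v^{1/n}$ for $v \leq \bar v$ yields \eqref{eq:quantitative_comparison_strong}, while \eqref{eq:quantitative_comparison_strong_2} follows at once since $\VOL(E)$ and $v$ are comparable. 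The delicate point is the upper bound in the middle step: one must simultaneously track the radius mismatch $R_v - \tilde t$ and the relative error $\omega_g'(R_v)R_v$ and verify that both contribute at the same scale $v^{(n+1)/n}$, which is precisely what produces the factor $v^{1/n}$ on the right of \eqref{eq:quantitative_comparison_strong}.
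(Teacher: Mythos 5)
Your proposal is correct and follows essentially the same route as the paper: transfer volume and perimeter to the Euclidean picture via Lemma \ref{lem:rescaling}, compare $\Per(E)$ with the perimeter of the geodesic ball of the same Riemannian volume to bound the Euclidean isoperimetric deficit by $Cv^{(n+1)/n}$, and conclude with the sharp quantitative isoperimetric inequality. The only cosmetic difference is that the paper packages the middle step through the inequality $\phi'(s)\leq\cosh(s)^{d-1/n}(n\phi(s))^{(n-1)/n}$ rather than your Taylor expansion of the two radii $R_v$ and $\tilde t$, and your bookkeeping in fact yields the slightly stronger relative deficit $O(v^{2/n})$ before you weaken it to $v^{1/n}$ to match the statement.
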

\begin{proof}
We start by proving that
\[
\phi'(s)\leq\cosh(s)^{d-1/n}(n\phi(s))^{(n-1)/n}.
\]
In fact
\begin{align*}
\phi(s)=\int_0^s\phi'(\tau)\,d\tau&=\int_0^s\sinh^{n-1}(\tau)\cosh^{d-1}(\tau)\,d\tau\geq\frac{1}{\cosh(s)}\int_0^s\sinh^{n-1}(\tau)\cosh(\tau)\,d\tau\\
&=\frac{1}{n\cosh(s)}\sinh^n(s),
\end{align*}
implies that
\[
\frac{\phi'(s)}{(n\phi(s))^{(n-1)/n}}\leq\cosh(s)^{d-1+(n-1)/n}=\cosh(s)^{d-1/n}.
\]
Let $\tilde s>0$ so that
\[
\Vol(E)=\Vol(\B(\tilde s)).
\]
Then, by Lemma \ref{lem:rescaling}, Equations \eqref{eq:comparison_volume} and \eqref{eq:comparison_perimeter} we get that
\begin{align*}
\PER(E)&\leq\Per(E)\leq\Per(\B(s))=n\om_n\phi'(\tilde s)\\
&\leq n\om_n^{1/n}\cosh(\tilde s)^{d-1/n}(n\om_n\phi(\tilde s))^{(n-1)/n}\\
&=n\om_n^{1/n}\cosh(\tilde s)^{d-1/n}\Vol(E)^{(n-1)/n}\\
&\leq n\om_n^{1/n}\cosh(\tilde s)^{d-1/n}(1+\omega'(\mu\bar v^{1/n})\mu\bar v^{1/n})\VOL(E)^{(n-1)/n}\\
&\leq n\om_n^{1/n}(1+Cv^{1/n})\VOL(E)^{(n-1)/n},
\end{align*}
since $\tilde s\leq\mu v^{1/n}$. By the quantitative strong isoperimetric inequality in $\R^n$, see \cite{FigalliMaggi}, Equation \eqref{eq:quantitative_comparison_strong} follows immediately. Equation \eqref{eq:quantitative_comparison_strong_2} is a consequence of the fact that $\VOL(E)\leq\Vol(E)\leq v$.
\end{proof}
We argue now by rescaling. Let
\[
\lambda^n:=\VOL(E)=\VOL(B^n(\tilde t))=\om_n\tilde t^n,
\]
and define the rescaled volume and perimeter operators
\begin{align*}
\Vol^*(G^*)&:=\lambda^{-n}\Vol(\lambda G^*),\\
\Per^*(G^*)&:=\lambda^{-(n-1)}\Per(\lambda G^*).
\end{align*}
Set $E^*:=\lambda^{-1}E$. Then we have immediately that the set $E^*$ is renormalized with respect to $g_e$, that is
\[
\VOL(E^*)=1.
\]
Moreover, the $L^1$-proximity is uniformly given by 
\begin{equation}\label{eq:L1_rescaled}
\Vol^*(E^*\triangle B^n(\lambda^{-1}\tilde x,\om_n^{-1/n}))\leq Cv^{1/2n},
\end{equation}
in virtue of Proposition \ref{prop:L1_proximity} and Lemma \ref{lem:rescaling}, Equation \eqref{eq:comparison_volume}. Finally, there exist $C^*>0$ and $s^*>0$ depending only on $(n,d,\bar v)>0$ such that
\begin{equation}\label{eq:rescaled_almost_min}
\Per^*(E^*,B^n(x,s))\leq \Per^*(F^*,B^n(x,s))+C^*\Vol^*(E^*\triangle F^*),
\end{equation}
whenever $E^*\triangle F^*\subset B^n(x,s)$ and $0<s\leq s^*$. This is a consequence of Proposition \ref{prop:almost_minimality_M} and the bounds on the sectional curvature, giving the existence of $\Lambda=\Lambda(n,d,\bar v)>0$ such that $B^n(x,s)\subset\B(x,\Lambda s)$ provided $\dist(o,x)\leq 2\mu \bar v^{1/n}+2$, $0<s<1$.
\begin{proposition}
\label{prop:Linfty_proximity}Let $E\subset M$ be an isoperimetric set of volume $\Vol(E)=v<\bar v$, and $\lambda>0$ such that $E^*:=\lambda^{-1}E$ has Euclidean volume equal to one. There exists $\tilde x(o)\in M$ and $c_1=c_1(n,\bar v)>0$ such that
\[
B^n(\lambda^{-1}\tilde x,(1-c_1 v^{1/2n^2})\om_n^{-1/n})\subset E^*\subset B^n(\lambda^{-1}\tilde x,(1+c_1 v^{1/2n^2})\om_n^{-1/n}).
\]
\end{proposition}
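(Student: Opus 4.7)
The plan is to upgrade the $L^1$-proximity \eqref{eq:L1_rescaled} to an $L^\infty$-proximity by combining the uniform almost-minimality \eqref{eq:rescaled_almost_min} with the classical uniform density estimates for almost-minimizers of the Euclidean perimeter. Let $\tilde y:=\lambda^{-1}\tilde x$, so the target Euclidean ball of unit volume is $B^n(\tilde y,\om_n^{-1/n})$, and work throughout with the precise representative of $E^*$ for which the topological boundary $\partial E^*$ coincides with the essential boundary.

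The first step is to invoke the classical density bounds for $(\Lambda,r)$-almost-minimizers of the Euclidean perimeter, see for instance \cite{Maggi_perimeter}. Since in \eqref{eq:rescaled_almost_min} the constants $C^*$ and $s^*$ depend only on $(n,d,\bar v)$, one obtains $\kappa=\kappa(n,d,\bar v)>0$ and $r_1=r_1(n,d,\bar v)>0$, independent of $v$ and of the particular isoperimetric set, such that
\[
\kappa r^n\leq\VOL(E^*\cap B^n(y,r))\leq(1-\kappa)\om_n r^n,\qquad\forall\,y\in\partial E^*,\ \forall\,r\leq r_1.
\]
Next I would confine $\partial E^*$ to a thin annulus around $\partial B^n(\tilde y,\om_n^{-1/n})$. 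If $y\in\partial E^*$ satisfies $|y-\tilde y|\geq\om_n^{-1/n}+r$ for some $r\leq r_1$, then $B^n(y,r)$ is disjoint from $B^n(\tilde y,\om_n^{-1/n})$, so $E^*\cap B^n(y,r)\subset E^*\triangle B^n(\tilde y,\om_n^{-1/n})$, and the lower density bound combined with \eqref{eq:L1_rescaled} (rewritten in Euclidean volume via \eqref{eq:comparison_volume}) yields $\kappa r^n\leq C\,v^{1/2n}$. Symmetrically, if $|y-\tilde y|\leq\om_n^{-1/n}-r$, then $B^n(y,r)\subset B^n(\tilde y,\om_n^{-1/n})$ and the upper density bound on $E^*$ (that is, a lower density bound on its complement) gives the same estimate. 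Optimizing the choice of $r$ then produces
\[
\partial E^*\subset\{y\in\R^n:\bigl|\,|y-\tilde y|-\om_n^{-1/n}\,\bigr|\leq\delta\},\qquad\delta:=C_2\,v^{1/2n^2},
\]
for some $C_2=C_2(n,d,\bar v)$, provided $v$ is small enough that $\delta\leq r_1$.

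The final step turns this annular confinement into the double inclusion. Since $\partial E^*$ is empty on the open inner ball $B^n(\tilde y,\om_n^{-1/n}-\delta)$ and on the exterior of $\overline{B^n(\tilde y,\om_n^{-1/n}+\delta)}$, the precise representative of $E^*$ is locally constant on each of these two open connected regions, hence either fully contained in or disjoint from $E^*$ on each. The mass constraint $\VOL(E^*)=1=\om_n(\om_n^{-1/n})^n$ combined with the $L^1$-bound \eqref{eq:L1_rescaled} excludes every configuration except $B^n(\tilde y,\om_n^{-1/n}-\delta)\subset E^*$ and $E^*\cap(\R^n\setminus\overline{B^n(\tilde y,\om_n^{-1/n}+\delta)})=\emptyset$: dropping the inner ball would force the symmetric difference to have measure of order $1$, while keeping mass outside the outer ball would force $\VOL(E^*)>1$. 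This establishes the claim with $c_1:=C_2\,\om_n^{1/n}$.

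The main obstacle I anticipate is ensuring that the density constants $\kappa$ and $r_1$ do not degenerate as $v\to 0$. This rests precisely on the fact that the rescaling in the definition of $E^*$ was designed so that the almost-minimality constants $C^*$ and $s^*$ in \eqref{eq:rescaled~ref{eq:rescaled_almost_min} depend only on $(n,d,\bar v)$, which itself hinges on the uniform diameter bound of Proposition \ref{prop:diameter_decay} and on the comparison estimates \eqref{eq:comparison_volume}--\eqref{eq:comparison_perimeter}. Once this uniformity is secured, the argument is a textbook instance of the "$L^1$-to-$L^\infty$" upgrade for uniform almost-minimizers of perimeter applied in the Euclidean ambient.
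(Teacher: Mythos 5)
Your proposal is correct and follows essentially the same route as the paper: both upgrade the $L^1$-bound \eqref{eq:L1_rescaled} to an annular confinement of $\partial E^*$ of width $O(v^{1/2n^2})$ via density estimates derived from the uniform almost-minimality \eqref{eq:rescaled_almost_min}, and then conclude the double inclusion by the connectedness/mass argument. The only cosmetic difference is that you import the two-sided density bounds for $\Lambda$-minimizers as a black box, whereas the paper re-derives the lower density bound in-line through the differential inequality $C_0^{-1}W(r)^{(n-1)/n}\leq C^*W(r)+2W'(r)$ obtained from Lemma \ref{lem:isoperimetric_type}.
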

\begin{proof}Let $x\in \partial E^*$, and $h>0$ be the Euclidean distance of $x$ to $\partial B(\lambda^{-1}\tilde x,\om_n^{-1/n})$. For $0<r<\min\{h/2,\tilde s\}$, define the function
\[
W(r):=\Vol^*(E^*\cap B^n(x,r)).
\]
Since $(E^*\cap B^n(x,r))\subset (E^*\triangle B^n(\lambda^{-1}\tilde x,\om_n^{-1/n}))$, we have thanks to Equation \eqref{eq:L1_rescaled} that
\[
W(r)\leq Cv^{1/2n}.
\]
On the other side, setting $F^*:=E^*\setminus B^n(x,r)$, the uniform almost-minimality gives
\[
\Per^*(E^*)\leq\Per^*(F^*)+C^*W(r).
\]
This, with Lemma \ref{lem:isoperimetric_type}, imply that
\begin{align*}
C_0^{-1}W(r)^{(n-1)/n}&\leq \Per^*(E^*\cap B^n(x,r))\leq\Per^*(E^*,B^n(x,r))+\Per^*(E^*\cap\partial B^n(x,r))\\
&\leq C^*W(r)+2\Per^*(E^*\cap\partial B^n(x,r))\\
&=C^*W(r)+2W'(r).
\end{align*}
This shows that there exists $\bar C>0$ such that
\[
\bar Cr^n\leq W(r)\leq Cv^{1/2n},
\]
implying
\[
0<r<\min\{s^*,Cv^{1/2n^2}/\bar C\},
\]
showing, up to taking $\bar v>0$ small enough, that $h\leq 2Cv^{1/2n^2}/\bar C$. This proves that there exists $c_1>0$ such that $\partial E^*$ is contained in the annulus 
\[
A:=B^n(\lambda^{-1}\tilde x,(1+c_1v^{1/2n^2})\om_n^{-1/n})\setminus B^n(\lambda^{-1}\tilde x,(1-c_1v^{1/2n^2})\om_n^{-1/n}).
\]
The $L^1$-proximity \eqref{eq:L1_rescaled} implies that $E^*$ contains the ball $B^n(\lambda^{-1}\tilde x,(1-c_1v^{1/2n^2})\om_n^{-1/n})$, because otherwise
\begin{align*}
Cv^{1/2n}\geq\Vol^*(E^*\triangle B^n(\lambda^{-1}\tilde x,\om_n^{-1/n}))&\geq\Vol^*(B^n(\lambda^{-1}\tilde x,(1-c_1v^{1/2n^2})\om_n^{-1/n}))\\
&\geq \Vol^*(\B(\lambda^{-1}\tilde x,(1-c_1v^{1/2n^2})\om_n^{-1/n}))\\
&=\lambda^{-n}n\om_n\phi'((1-c_1v^{1/2n^2})\om_n^{-1/n})),
\end{align*}
leading to a contradiction when $v>0$ is small enough, since $\lambda^n\sim v$.
\end{proof}
To complete the proof of $L^\infty$-proximity for the rescaled isoperimetric set $E^*$, we need to prove that the center of the ball $\lambda^{-1}\tilde x$ goes to the origin as $v$ goes to zero. This is possible is we impose the lifting point $o\in M$ to be the barycenter of $E$, that is $o=\bary(E)$.
\begin{lemma}\label{lem:admissible_bary} Let $E\subset M$ be as in Proposition \ref{prop:L1_proximity}. Then, the barycenter $\bary(E)$ is admissible, in the sense that
\[
E\subset\B(\bary(E),\mu v^{1/n}).
\]
\end{lemma}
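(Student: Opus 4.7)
The strategy is to exploit the CAT(0) property of the Hadamard manifold $M$: since $M$ is simply connected with sectional curvature in $[-4,-1]$, for any three points $p,q,x\in M$ and the constant-speed geodesic $\gamma:[0,1]\to M$ from $p$ to $q$ one has the CAT(0) inequality
\[
\dist(x,\gamma(t))^2 \leq (1-t)\dist(x,p)^2 + t\,\dist(x,q)^2 - t(1-t)\dist(p,q)^2,\qquad t\in[0,1].
\]
The plan is to apply this pointwise comparison with $p=\bary(E)$ and an arbitrary $q\in E$, integrate against $d\vol_g$ over $E$, and then exploit the minimizing property in the definition of the barycenter.

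More precisely, fix $q\in E$ and set $p=\bary(E)$. Integrating the CAT(0) inequality against $d\vol_g(x)$ over $x\in E$, invoking the minimality of $p$ for the functional $r\mapsto\int_E\dist(x,r)^2\,d\vol_g(x)$ on the left-hand side, then dividing by $t>0$ and letting $t\to 0^+$, the resulting first-order comparison rearranges to
\[
\dist(p,q)^2\,\Vol(E)\leq \int_E\dist(x,q)^2\,d\vol_g(x)-\int_E\dist(x,p)^2\,d\vol_g(x).
\]

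From here the conclusion is immediate: since $q\in E$, Proposition~\ref{prop:diameter_decay} gives $\dist(x,q)\leq\diam(E)\leq\mu v^{1/n}$ for every $x\in E$, so the first integral on the right is bounded by $\mu^2v^{2/n}\Vol(E)$ while the second is non-negative. Hence $\dist(\bary(E),q)\leq\mu v^{1/n}$ for every $q\in E$, which is precisely the required inclusion $E\subset\B(\bary(E),\mu v^{1/n})$.

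The only non-routine ingredient is the CAT(0) inequality, which is standard in Hadamard manifolds, so I do not foresee any real obstacle. Equivalently, one could phrase the same argument by recalling that the barycenter of a compactly supported probability measure on a CAT(0) space lies in the closed convex hull of its support, and that the convex hull of a set preserves its diameter in CAT(0); the direct variational computation above is just slightly more economical.
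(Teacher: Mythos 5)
Your argument is correct, but it takes a genuinely different route from the paper. The paper also starts from Proposition \ref{prop:diameter_decay}, placing $E$ in a small ball $\B(o,\mu v^{1/n}/2)$, and then argues by contradiction: if $p=\bary(E)$ lay outside that ball, every $x\in E$ would satisfy $g(\exp_p^{-1}(x),\exp_p^{-1}(o))>0$ (all of $E$ is seen from $p$ within an acute angle of the direction to $o$), violating the first-order condition \eqref{eq:barycenter_exp}. You instead never touch the Euler--Lagrange equation and work directly with the minimizing property of the barycenter via the CAT(0) variance inequality, which is legitimate here since $M$ is a Hadamard manifold. Your route is arguably cleaner and yields the sharper, fully general statement $\dist(\bary(E),q)\leq\diam(E)$ for every $q\in E$ (equivalently, the barycenter lies in the closed convex hull of $E$), so it does not even need the factor-of-two slack the paper builds in by halving the radius; the paper's argument is shorter on the page but implicitly relies on the same nonpositive-curvature comparison to justify the sign of the inner product. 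Two minor points to tidy up: your inequality a priori gives only $\dist(p,q)\leq\mu v^{1/n}$, i.e.\ containment in the closed ball, but strictness is immediate because the discarded term $\int_E\dist(x,p)^2\,d\vol_g(x)$ is strictly positive; and the limit $t\to 0^+$ is unnecessary, since after dividing by $t$ the inequality already holds for every $t\in(0,1]$ and one may simply take $t=1$ at the cost of the harmless prefactor $(1-t)$ becoming $0$ --- keeping $t$ small as you do is what recovers the full coefficient $1$ in front of $\dist(p,q)^2$, so your choice is the right one.
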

\begin{proof}In virtue of Proposition \ref{prop:diameter_decay}, there exists $o\in M$ such that $E\subset\B(o,v^{1/n}\mu/2)$. If $p:=\bary(E)\in\B(o,v^{1/n}\mu/2)$ we are done. If this is not the case, then 
\[
g(\exp_p^{-1}(x),\exp^{-1}_p(o))>0,
\]
for all $x\in E$, contradicting Equation \eqref{eq:barycenter_exp}.
\end{proof}
This allows us to chose $o=\bary(E)$ in Proposition \ref{prop:L1_proximity} and Proposition \ref{prop:Linfty_proximity}. We can prove that the associated center of the Euclidean ball $\lambda^{-1}\tilde x$ goes to the origin as $v$ goes to zero.
\begin{proposition}\label{prop:final_Linfty} Let $E\subset M$ be as in Proposition \ref{prop:L1_proximity}. Choosing the normal coordinates pointed at $o=\bary(E)\in M$, we have that there exists $C=C(n,d,\bar v)>0$ so that
\[
\abs{\lambda^{-1}\tilde x}\leq Cv^{1/2n^2}.
\]
In particular, there exists $c_2=c_2(n,d,\bar v)>0$ such that
\begin{equation}\label{eq:bary_at_zero_Linfty}
B^n(0,(1-c_2 v^{1/2n^2})\om_n^{-1/n})\subset E^*\subset B^n(0,(1+c_2 v^{1/2n^2})\om_n^{-1/n}).
\end{equation}
\end{proposition}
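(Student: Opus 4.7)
The plan is to exploit the barycenter identity \eqref{eq:barycenter_at_the_origin} together with the $L^1$-proximity of Proposition \ref{prop:L1_proximity} to force the Euclidean center $\tilde x$ to be small. Since we are working in normal coordinates pointed at $o=\bary(E)$, the Riemannian barycenter relation reads
\[
0=\int_E x\,d\vol_g(x)=\int_E x\,\omega_g(r)\,dx,
\]
where $dx$ is the ambient Euclidean Lebesgue measure. This lets us convert the Riemannian identity into a statement about the Euclidean first moment of $E$ by writing
\[
\int_E x\,dx=\int_E x\bigl(1-\omega_g(r)\bigr)\,dx.
\]

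Because $\omega_g$ is even, positive and equal to $1$ at the origin (see Lemma \ref{lem:density_est}), a Taylor expansion yields $|1-\omega_g(r)|\leq C r^2$ on any bounded set. Combined with the diameter bound of Proposition \ref{prop:diameter_decay} applied to the admissible point $o=\bary(E)$ (which is admissible by Lemma \ref{lem:admissible_bary}), namely $E\subset \B(o,\mu v^{1/n})$ and $|x|\leq \mu v^{1/n}$ on $E$, this gives
\[
\Bigl|\int_E x\,dx\Bigr|\leq \mu v^{1/n}\cdot C v^{2/n}\cdot\VOL(E)\leq C v^{1+3/n}.
\]

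Next I compare $\int_E x\,dx$ to the Euclidean first moment of the approximating ball $B^n(\tilde x,\tilde t)$ provided by Proposition \ref{prop:L1_proximity}, which is simply $\tilde x\cdot\VOL(B^n(\tilde t))=\tilde x\cdot\VOL(E)$. The error is controlled by the diameter times the $L^1$ defect:
\[
\Bigl|\int_E x\,dx-\tilde x\,\VOL(E)\Bigr|\leq \bigl(\mu v^{1/n}+\tilde t\bigr)\cdot\VOL(E\triangle B^n(\tilde x,\tilde t))\leq C v^{1/n}\cdot v^{1+1/2n}=C v^{1+3/(2n)},
\]
using \eqref{eq:quantitative_comparison_strong_2}. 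Dividing by $\VOL(E)\sim v$ yields $|\tilde x|\leq C v^{3/(2n)}$. Since $\lambda^n\sim v$, rescaling produces
\[
|\lambda^{-1}\tilde x|\leq C v^{3/(2n)-1/n}=Cv^{1/(2n)}\leq Cv^{1/(2n^2)},
\]
for $v$ small enough, which is the first claim.

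The $L^\infty$-sandwich \eqref{eq:bary_at_zero_Linfty} then follows immediately by inserting this bound into the inclusions of Proposition \ref{prop:Linfty_proximity} and using the triangle inequality on the Euclidean radii, absorbing the displacement of the center into a slightly larger constant $c_2>c_1$. The delicate point of the argument is verifying that the exponent $v^{2/n}$ in the deviation $|1-\omega_g(r)|$ (rather than a linear one) is really available; this is precisely where the evenness of $\omega_g$, proved in Lemma \ref{lem:density_est}, is essential. Once that is in hand the proof is a matter of assembling the diameter bound, the $L^1$ estimate, and the barycenter identity in the correct order; no further compactness or implicit arguments are required.
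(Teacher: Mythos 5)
Your proof is correct, but it reaches the displacement bound by a genuinely different (and in fact sharper) route than the paper. The paper first rescales the barycenter identity to $E^*$, writes $\lambda^{-1}\tilde x$ as the first moment of the translated set $E^*-\lambda^{-1}\tilde x$ plus a density-deviation term, and controls the dominant first term using the $L^\infty$ annulus containment of Proposition \ref{prop:Linfty_proximity}; this is why the paper's exponent is the $L^\infty$ exponent $v^{1/2n^2}$. You instead work at the original scale and compare $\int_E x\,dx$ directly with the first moment $\tilde x\,\VOL(E)$ of the approximating ball, paying only the $L^1$ defect \eqref{eq:quantitative_comparison_strong_2} times the diameter; the barycenter identity then controls $\int_E x\,dx$ through the density deviation. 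This bypasses the $L^\infty$ proximity entirely for the first claim and yields the stronger bound $\abs{\lambda^{-1}\tilde x}\leq Cv^{1/2n}\leq Cv^{1/2n^2}$, which is all that is needed. Two small points. First, your estimate $\abs{x}\leq \mu v^{1/n}+\tilde t$ on the symmetric difference implicitly uses that $\abs{\tilde x}\leq Cv^{1/n}$ a priori; this is true but should be said: since $\VOL(E\triangle B^n(\tilde x,\tilde t))<\VOL(E)$ for small $v$, the ball $B^n(\tilde x,\tilde t)$ meets $E\subset B^n(0,\mu v^{1/n})$, so $\abs{\tilde x}\leq\mu v^{1/n}+\tilde t$. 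Second, the quadratic bound $\abs{1-\omega_g(r)}\leq Cr^2$ is available (evenness of $\omega_g$ gives $\omega_g'(0)=0$), but it is not actually essential as you claim: even the linear bound $\abs{1-\omega_g(r)}\leq Cr$ gives $\abs{\int_E x\,dx}\leq Cv^{1+2/n}$, which is still dominated by the $L^1$ error $Cv^{1+3/(2n)}$, so the conclusion is unaffected. The deduction of \eqref{eq:bary_at_zero_Linfty} from Proposition \ref{prop:Linfty_proximity} by enlarging the constant is exactly what the paper does.
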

\begin{proof}
Recall that by Equation \eqref{eq:barycenter_at_the_origin}, if the barycenter of $E$ is at the origin with respect to the normal coordinates $x=r\varphi$, $(r,\varphi)\in (0,+\infty)\times S^{n-1}$, then
\[
0=\int_Ex\omega_g(r)\,d\Haus^n.
\]
Rescaling, we have that
\[
0=\int_{E^*}x\omega_g(\lambda r)\,d\Haus^n.
\]
Therefore
\begin{align*}
\abs{\lambda^{-1}\tilde x}&=\abs*{\int_{E^*}(\lambda^{-1}\tilde x-x)+x(\omega_g(\lambda r)-1)\,d\Haus^n}\\
&\leq\abs*{\int_{E^*-\lambda^{-1}\tilde x}x\,d\Haus^n}+\int_{E^*}r(\omega_g(\lambda r)-1)\,d\Haus^n.
\end{align*}
Thank to Proposition \ref{prop:Linfty_proximity}, we can estimate the first integral as follows:
\begin{align*}
\abs*{\int_{E^*-\lambda^{-1}\tilde x}x\,d\Haus^n}&\leq\abs*{\int_{S^{n-1}}\int_{\om_n^{-1/n}(1-c_1v^{1/2n^2})}^{\om_n^{-1/n}(1+c_1v^{1/2n^2})}\chi_{E}(r\varphi)r^n\varphi\,dr\,d\varphi}\\
&\leq\PER(\B)\int_{\om_n^{-1/n}(1-c_1v^{1/2n^2})}^{\om_n^{-1/n}(1+c_1v^{1/2n^2})}r^n\,dr\\
&=\frac{\PER(\B)\om_n^{-(n+1)/n}}{n+1}\sum_{k=0}^{n+1}\binom{n+1}{k}(1-(-1)^k)c_1^kv^{k/2n^2}\\
&\leq \frac C2 v^{1/2n^2},
\end{align*}
for some constant $C>0$. Lemma \ref{lem:admissible_bary} implies that $E^*\subset B^n(0,K)$, for some universal $K>0$. Hence, we estimate the second integral as
\begin{align*}
\int_{E^*}r(\omega_g(\lambda r)-1)\,d\Haus^n&=\int_{E^*}r\int_0^{\lambda r}\omega'_g(\tau)\,d\tau\,d\Haus^n\\
&\leq K^2\lambda\omega_g'(\lambda K)\\
&\leq \frac{C}{2}v^{1/n},
\end{align*}
showing that $\abs{\lambda^{-1}\tilde x}\leq Cv^{1/2n^2}$. Equation \eqref{eq:bary_at_zero_Linfty} is a consequence of this and Propositon \ref{prop:Linfty_proximity}.
\end{proof}
After stating the key regularity result, the  proof of Theorem \ref{thm:small_volumes} will be a corollary of Theorem \ref{thm:1} and Proposition \ref{prop:final_Linfty}.
\begin{theorem}\label{thm:regularity}Let $\seq{E_\varepsilon}_{\varepsilon>0}$ be a sequence of sets with finite perimeter in $\R^n$ and $\seq{\F_\varepsilon}_{\varepsilon>0}$ a sequence of functionals of the form
\[
\F_\varepsilon(G):=\int_{\partial^*G}f_\varepsilon(x,\nu)\,d\Haus^{n-1}(x),
\]
where $G$ is a generic set of finite perimeter, $\nu$ its measure theoretic outwards unit normal and $\seq{f_\varepsilon}_{\varepsilon>0}$ a family  of $C^2$-functions, uniformly $\lambda$-elliptic, and with uniformly bounded Hessian in a fixed ball $B(0,2R)$, that is
\[
\sup\Bigl\{\abs{D^2_{\xi\xi}f_\varepsilon(x,\xi)}:(x,\xi)\in B(0,2R)\times S^{n-1}\Bigr\}\leq \Lambda,
\]
for a universal constant $\Lambda>0$. If for all $\varepsilon>0$
\[
B^n(0,(1-\varepsilon)r)\subset E_\varepsilon\subset B^n(0,(1+\varepsilon)r),
\]
and $E_\varepsilon$ is uniformly almost-minimizing  with respect to $\F_\varepsilon$, then there exists $\varepsilon_1>0$ such that for all $0<\varepsilon\leq \varepsilon_1$
\[
\partial E_\varepsilon=\Bigl\{r(1+\rho_\varepsilon(\varphi)):\varphi\in S^{n-1}\Bigr\},
\]
where $\rho_\varepsilon\in C^1(S^{n-1})$ and $\norm{\rho_\varepsilon}_{C^1}\to 0$ as $\varepsilon\to 0$.
\end{theorem}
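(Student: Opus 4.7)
The approach is to invoke the $C^{1,\alpha}$-regularity theory for almost-minimizers of anisotropic elliptic perimeter functionals developed by Figalli in \cite{figalli2017regularity}. The hypotheses of uniform $\lambda$-ellipticity and uniformly bounded Hessian of the integrands $f_\varepsilon$, together with the uniform almost-minimality of $E_\varepsilon$ with respect to $\F_\varepsilon$, are precisely the structural assumptions under which that theory produces $C^{1,\alpha}$-estimates with constants independent of $\varepsilon$.

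First, I would extract from the uniform almost-minimality the standard density estimates: there exists a universal $c>0$ (depending only on $n$, $\lambda$, $\Lambda$ and the uniform almost-minimality constant) such that for every $x\in\partial E_\varepsilon$ and every sufficiently small $s>0$ one has $c s^n \leq \VOL(E_\varepsilon\cap B^n(x,s))\leq(1-c)s^n$ and $\PER(E_\varepsilon, B^n(x,s))\geq c s^{n-1}$. These are obtained by comparing $E_\varepsilon$ with $E_\varepsilon\cup B^n(x,s)$ and $E_\varepsilon\setminus B^n(x,s)$ and using the Euclidean relative isoperimetric inequality, exactly as in the classical theory for $\Lambda$-minimizers of the Euclidean perimeter.

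Next, the annulus-trapping hypothesis implies that at every point $x\in\partial E_\varepsilon$ the boundary is $\varepsilon$-close in $L^\infty$ to the tangent hyperplane to $\partial B^n(0,r)$ at the nearest spherical point. Combined with the density estimates, this places $\partial E_\varepsilon$ in the small-excess regime of the flatness-improvement (excess-decay) theorem of \cite{figalli2017regularity}. The conclusion is that on a fixed scale around each boundary point, $\partial E_\varepsilon$ is a $C^{1,\alpha}$-graph over the approximating hyperplane, with $C^{1,\alpha}$-norm controlled by a universal constant times the excess, which in our setting is itself controlled by $\varepsilon$.

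Finally, a patching argument together with the change of variables from normal to radial coordinates (which is a smooth diffeomorphism on the annulus for $\varepsilon$ small) produces a global representation $\partial E_\varepsilon=\{r(1+\rho_\varepsilon(\varphi))\varphi:\varphi\in S^{n-1}\}$ with $\rho_\varepsilon\in C^{1,\alpha}(S^{n-1})$, $\|\rho_\varepsilon\|_{C^{1,\alpha}}\leq M$ uniformly in $\varepsilon$, and $\|\rho_\varepsilon\|_{C^0}\leq 2\varepsilon$. Interpolation between the uniform $C^{1,\alpha}$-bound and the $C^0$-decay then yields $\|\rho_\varepsilon\|_{C^1}\to 0$ as $\varepsilon\to 0$. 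The main obstacle is the careful bookkeeping needed to verify that the constants in Figalli's regularity result really depend only on $n$, $\lambda$, $\Lambda$ and the uniform almost-minimality parameter, and hence are independent of $\varepsilon$; this is built into the quantitative form of that theorem but has to be made explicit.
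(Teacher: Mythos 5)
Your proposal is correct and follows exactly the route the paper takes: the paper's entire proof is the citation of \cite[Theorem 2.2]{figalli2017regularity}, which is precisely the result you invoke, and your sketch of density estimates, excess decay, graphicality, and $C^0$--$C^{1,\alpha}$ interpolation is a faithful account of what that black box contains. The only point deserving a word more of care is the passage from annulus-trapping to small excess, which in the standard theory goes through the convergence of perimeter measures for almost-minimizers rather than through the $L^\infty$ bound alone; this too is built into the cited theorem.
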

\begin{proof}See \cite[Theorem 2.2]{figalli2017regularity}.
\end{proof}
We are now ready to prove Theorem \ref{thm:small_volumes}.
\begin{proof}[Proof of Theorem \ref{thm:small_volumes}]Consider a sequence of decreasing volumes $\bar v\geq v_k\to 0$, and let $E_k$ be one isoperimetric region with volume $v_k$ in $M$. Let $E^*_k$ be the rescaling of $E_k$ pointed at $\bary(E_k)$, so that $\VOL(E^*_k)=1$. Then, looking at $E_k^*$ as a sequence of sets with finite perimeter in $\R^n$, we can apply Proposition \ref{prop:final_Linfty} and Theorem \ref{thm:regularity} to infer that there exists $\rho^*_k\in C^1(S^{n-1})$ so that
\[
\partial E^*_k=\{\om_n^{-1/n}(1+\rho^*_k(\varphi)):\varphi\in S^{n-1}\},
\]
and $\norm{\rho^*_k}_{C^1}\to 0$ as $k\to\infty$. Therefore, letting $R_k>0$ be such that $\B(R_k)=v_k$, and $\lambda_k^n=\VOL(E_k)$, we have that
\[
\partial E_k=\{R_k(1+\rho_k(\varphi)):\varphi\in S^{n-1}\},
\]
where $\rho_k:=\frac{\lambda_k\om_n^{-1/n}}{R_k}-1+\frac{\lambda_k\om_n^{-1/n}}{R_k}\rho_k^*\to 0$ in $C^1$ as $k\to+\infty$. Applying Theorem \ref{thm:1}, we conclude the proof.
\end{proof}
\subsection*{Acknowledgments}
The author would like to thank Professors A. Figalli and U. Lang for their guidance and constant support. The author has received funding from the European Research Council under the Grant Agreement No. 721675 “Regularity and Stability in Partial Differential Equations (RSPDE)”.
\printbibliography
\end{document}